\newtheorem{theorem}{Theorem}[section]
\newtheorem{definition}[theorem]{Definition}
\newtheorem{lemma}[theorem]{Lemma}
\newtheorem{proposition}[theorem]{Proposition}
\newtheorem{corol}[theorem]{Corollary}
\newtheorem{remark}[theorem]{Remark}
\newtheorem{example}[theorem]{Example}
\newcommand{\si}{\mathbb{S}}
\newcommand{\Er}{\mathscr{M}}
\newcommand{\rr}{\mathbb{R}}
\newcommand{\cc}{\mathbb{C}}
\newcommand{\hh}{\mathbb{H}}
\newcommand{\pp}{\partial}
\newcommand{\unx}{\underline{x}}
\newcommand{\uny}{\underline{y}}
\newcommand{\vx}{\vec x\, }
\newcommand{\vp}{\vec p\, }
\newcommand{\vs}{\vec s\, }
\title{\bf Slice monogenic functions }
\author{Fabrizio Colombo\\Dipartimento di Matematica\\Politecnico di
Milano\\Via Bonardi, 9\\20133 Milano,
Italy\\fabrizio.colombo@polimi.it
\\
\and Irene Sabadini\\Dipartimento di Matematica\\Politecnico di
Milano\\Via Bonardi, 9\\20133 Milano,
Italy\\irene.sabadini@polimi.it \and Daniele C.
Struppa\\Department of Mathematics \\and Computer Sciences
\\Chapman University\\Orange, CA 92866 USA,
\\struppa@chapman.edu}
\date{ }
\begin{document}
\maketitle
\begin{abstract}
In this paper we offer a new definition of monogenicity for functions defined
on $\rr^{n+1}$ with values in the Clifford
algebra $\rr_n$ following an idea inspired by the recent papers \cite{gs}, \cite{advances}.
This new class of monogenic functions contains the polynomials (and, more in general,
 power series) with coefficients in the
Clifford algebra $\rr_n$.  We will prove a Cauchy integral
formula as well as some of its consequences. Finally, we deal with the zeroes of some polynomials and
power series.
\end{abstract}

AMS classification: 30G35.

\section{Introduction}
In the past thirty years monogenic functions with values in a Clifford algebra $\rr_n$
have been successfully and intensively studied.
The literature is very rich of results and the studies on the topic are ongoing.
However, one disappointment about monogenic functions is that the identity function
or the powers of the variable considered are not monogenic functions. In the specific case of
the Clifford algebra over two imaginary units, the algebra of quaternions $\hh$, Cullen introduced
a notion whose purpose was exactly to overcome this problem.
Cullen's study (see \cite{cullen}) is based on
the so called intrinsic functions  introduced  by Rinehart for functions
with values in
an algebra. On the basis of
Rinehart's work, Cullen defined a new class of regular functions and it can be shown that the class of
quaternionic regular functions defined by Cullen contains all the
power series of the form $\sum_n q^n a_n$ with real coefficients $a_n$. Recently,
Gentili and Struppa adopted a definition of regularity (slice regularity), see \cite{gs},
\cite{advances}, where they prove that slice regular functions can be
expanded into power series $\sum_n q^n a_n$, with coefficients $a_n\in\hh$.

In this paper, widely inspired by \cite{advances}, we further generalize the ideas therein
to the case of functions
defined on domains of the Euclidean space $\rr^{n+1}$ having values in a Clifford algebra.
We will say that a function $f:\ U\subseteq\rr^{n+1}\to\rr_n$ is slice monogenic if for any
unit  1-vector $I$ the restriction $f_I$ of the function $f$ to the complex plane $x+Iy$
is holomorphic.
We show how slice monogenic functions can be related to power series, we will prove a Cauchy integral
formula as well as some of its consequences. Finally, we deal with the zeroes of polynomials and
power series in the variable $\vx\in\rr^{n+1}$.

An important application of our ideas is a new  way to define a functional calculus for an $n$-tuple
of non commuting operators. In the last section of this paper we provide the algebraic foundations for
such calculus that has been developed in \cite{newfunc}.

Note that in \cite{gs1} the authors used similar ideas to study
the case of functions $f: \rr_3\to\rr_3$: their study shows some interesting geometric peculiarities
of $\rr_3$ which leads the theory in a different direction from the one pursued in this paper.

{\it Acknowledgements} The first and third authors are grateful to
Chapman University for the hospitality during the period in which
this paper was written. They are also indebted to G.N.S.A.G.A. of
INdAM and the Politecnico di Milano for partially supporting their
visit.
\section{Slice monogenic functions}

Let $\rr_n$ be the real Clifford algebra over $n$ units $e_1,\dots
,e_n$ such that $e_ie_j+e_je_i=-2\delta_{ij}$ (see e.g.
\cite{bds}, \cite{csss} or \cite{dss} for the basic notation). An
element in the Clifford algebra will be denoted by $\sum_A e_Ax_A$
where $A=i_1\ldots i_r$, $i_\ell\in \{1,2,\ldots, n\}$,
$i_1<\ldots <i_r$ is a multi-index and $e_A=e_{i_1} e_{i_2}\ldots
e_{i_r}$;  the number $r$ of units in $e_A$ is denoted by $|A|$.
Elements in the linear space $\rr_n^k$ generated by basis vectors
$e_A$ with $|A|=k$ are called $k$-vectors. An element
$\unx\in\rr^n$ can be identified with a 1-vector in the Clifford
algebra: $(x_1,x_2,\ldots,x_n)\mapsto \unx= x_1e_1+\ldots+x_ne_n$
while real numbers will be identified with the 0-vectors, i.e.
with elements in  $\rr_n^0$. A function $f:\ U\subseteq
\rr^n\to\rr_n$ is seen as a function $f(\unx)$ of $\unx$. There
are in the literature several ways to define a notion of
generalized holomorphy for function with values in $\rr_n$. The
most successful is the so called monogenicity which has been
intensively studied during the past thirty years (see for example
\cite{bds}, \cite{csss}, \cite{dss}). A differentiable function
$f$ defined on an open set $U\subseteq\rr^n$ is said to be
monogenic if it is in the kernel of the Dirac operator
$$
\partial_{\unx}=\sum_{i=1}^ne_i\pp_{x_i}.
$$
An important variation of the Dirac operator is the Weyl operator
$$
\partial_{\vx}=\partial_{x_0}+\sum_{i=1}^ne_i\pp_{x_i}
$$
acting on functions $f:U\subseteq\rr^{n+1}\to\rr_n$ and where the variable
in $\rr^{n+1}$ is identified with $\vx=x_0+\unx$. The theory of the
functions in the kernel of the Dirac or of the Weyl operator are equivalent
and the word monogenic is used for functions in the kernel of either of them.
Despite the fact that several results on the holomorphic functions in
one complex variable can be generalized to this setting, the theory of monogenic functions
shows an unpleasant feature: neither the identity function $f(\unx)=\unx$ (or  $f(\vx)=\vx$)
or the powers of the variable
$g(\unx)=\unx^n$ (or $g(\vx)=\vx^n$)  are monogenic.
To introduce our variation of "hyperholomorphic" functions, i.e. the theory of slice monogenic
functions, we will denote
by $\mathbb{S}$ the sphere of unit 1-vectors, i.e.
$$
\mathbb{S}=\{ \unx=e_1x_1+\ldots +e_nx_n\ |\  x_1^2+\ldots +x_n^2=1\}.
$$
The 2-plane $\mathbb{R}+I\mathbb{R}$ passing through $1$ and $I$ will be denoted by $L_I$:
it is a real subspace of $\rr^{n+1}$ isomorphic to the complex plane.
An element $\vx\in L_I$ will be denoted by $x+Iy$.

\begin{definition} Let $U\subseteq\rr^{n+1}$ be a domain and let
$f:\ U\to\rr_n$ be a real differentiable function. Let
$I\in\mathbb{S}$ and let $f_I$ the restriction of $f$ to the
complex line $L_I$.
We say that $f$ is a slice monogenic function (in short s-monogenic function) if for every
$I\in\mathbb{S}$
$$
\frac{1}{2}\left(\frac{\partial }{\partial x}+I\frac{\partial
}{\partial y}\right)f_I(x+Iy)=0.
$$
\end{definition}
However, when dealing with a notion of monogenicity (compare
for example \cite{bds}) there are two possibilities for the
position of the imaginary units so, in this case, it is possible
to introduce an absolutely analogous notion of slice monogenicity on the right.
The theory of right slice monogenic functions is equivalent to the theory of slice
left monogenic functions. In the sequel, we will consider monogenicity on the left and,
for simplicity, we will denote by
$\overline{\partial}_I$ the operator
$\frac{1}{2}\left(\frac{\partial }{\partial x}+I\frac{\partial
}{\partial y}\right)$ and we will refer to the left slice monogenic
function as s-monogenic functions. We will also introduce a notion of $I$-derivative
by means of the operator:
$$
{\partial}_I:=\frac{1}{2}\left(\frac{\partial }{\partial x}-I\frac{\partial
}{\partial y}\right).
$$
\begin{remark}{\rm The s-monogenic functions on $U\subseteq\rr^{n+1}$ form a
right module $\Er (U)$.
In fact it is trivial that if $f,g\in \Er (U)$ then for every
$I\in\si$ one has $\overline{\partial}_If_I=\overline{\partial}_Ig_I=0$,
thus $\overline{\partial}_I(f+g)_I=0$.
Moreover, for any $a\in\rr_n$ we have $\overline{\partial}_I(f_Ia)=
(\overline{\partial}_If)a=0$. It is not true, in general, that the product
of two s-monogenic functions is s-monogenic.
}
\end{remark}
\begin{remark}{\rm As noted in \cite{advances} monomials $\vx^na_n$, $a_n\in\rr_n$
are left s-monogenic, thus also polynomials $\sum_{n=0}^N \vx^na_n$
are s-monogenic. Moreover, any power series $\sum_{n=0}^{+\infty} \vx^na_n$ is left s-monogenic in
its domain of convergence.}
\end{remark}
\begin{definition}
Let $U$ be a domain in $\rr^{n+1}$ and let $f:\ U\to\rr_n$ be an s-monogenic function.
Its s-derivative $\pp_s$ is defined as
\begin{equation}\label{s-derivative}
\pp_s(f)=\left\{\begin{array}{ll}
\pp_I(f)(\vx)\quad &\vx=x+Iy,\ y\not=0\\
\pp_xf (x)\quad &{x\in\rr}.
\end{array}\right.
\end{equation}
\end{definition}

Note that the definition of s-derivative is well posed because it is applied
only to s-monogenic functions. In fact, if a function $f$ is s-monogenic, its
s-derivative in the point $\vec x$ equals $\frac{\pp}{\pp x}f
(\vec x)$ while there can be problems if $f$ is not s-monogenic.
As pointed out in \cite{advances} this phenomenon is peculiar
of the hypercomplex case, as the unit sphere of imaginary numbers
has positive dimension, and does not appear in the complex case since the unit sphere
is only made of
two points.
The following results can be proved as in \cite{advances}.
\begin{proposition}\label{propreg}
1. (\cite{advances}[Proposition 2.6])
The s-derivative $\pp_s(f)$ of an s-monogenic function $f$ is
an s-monogenic function. Moreover $\pp_s^nf(x+Iy)=\displaystyle\frac{\pp^n}{\pp x^n}f(x+Iy)$.
\par\noindent
2.
The s-derivative of a power series $\sum_{n=0}^{+\infty} \vx^na_n$ equals
$\sum_{n=0}^{+\infty} n\vx^{n-1}a_n$ and has the same radius of convergence of the original series.
\end{proposition}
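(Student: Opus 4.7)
The plan is to reduce both parts to slice-wise statements, exploiting the fact that on each complex line $L_I$ the restriction $f_I$ is a classical holomorphic function of the variable $x+Iy$.

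For part 1, fix $I\in\si$ and work on $L_I\cap U$ with $y\neq 0$. Since $\overline{\pp}_I f_I=0$, I would first observe the pointwise identity $\pp_I f_I=\pp_x f_I$: indeed, $\overline{\pp}_I f_I=0$ means $\frac{\pp f_I}{\pp x}=-I\frac{\pp f_I}{\pp y}$, so
\[
\pp_I f_I=\frac{1}{2}\!\left(\frac{\pp f_I}{\pp x}-I\frac{\pp f_I}{\pp y}\right)=\frac{1}{2}\!\left(\frac{\pp f_I}{\pp x}+\frac{\pp f_I}{\pp x}\right)=\frac{\pp f_I}{\pp x}.
\]
In particular $\pp_s f$ coincides on $L_I$ with $\pp_x f_I$, so the definition (\ref{s-derivative}) agrees with $\pp_x f$ also across the real axis, giving continuity (and real-analyticity slice by slice). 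To see that $\pp_s f$ is s-monogenic, note that $\pp_I$ and $\overline{\pp}_I$ are constant-coefficient operators in $x,y$ that commute, hence
\[
\overline{\pp}_I(\pp_s f)_I=\overline{\pp}_I(\pp_I f_I)=\pp_I(\overline{\pp}_I f_I)=0
\]
on each slice, which is exactly s-monogenicity. Iterating the identity $\pp_s f=\pp_x f$ on each $L_I$ yields $\pp_s^n f(x+Iy)=\pp_x^n f(x+Iy)$. The main technical nuisance is justifying that no ambiguity arises as $y\to 0$ and $I$ varies; this is handled precisely by the identity $\pp_I f_I=\pp_x f_I$ above, which shows the slice definition is independent of $I$ along the real axis.

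For part 2, by Remark 2.3 the series $\sum_{n\ge 0}\vx^n a_n$ is s-monogenic on the open ball of convergence, so part 1 is applicable. On a slice $L_I$ the restriction becomes an ordinary power series $\sum_{n\ge 0}(x+Iy)^n a_n$ in the complex variable $z=x+Iy$ with coefficients in $\rr_n$. Termwise differentiation in $x$ (legitimate because the series converges normally on compact subsets of the disk of convergence in $L_I$) gives
\[
\pp_x\!\left(\sum_{n=0}^{+\infty}(x+Iy)^n a_n\right)=\sum_{n=0}^{+\infty} n(x+Iy)^{n-1} a_n,
\]
and by part 1 the left-hand side equals $\pp_s f$ on $L_I$. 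Since this holds for every $I\in\si$, the identity $\pp_s f(\vx)=\sum_{n\ge 0} n\vx^{n-1} a_n$ holds globally. Finally, the radius of convergence is unchanged because the Cauchy--Hadamard formula gives
\[
\limsup_{n\to\infty}\|na_n\|^{1/n}=\limsup_{n\to\infty}\|a_n\|^{1/n},
\]
so the differentiated series converges on the same ball.

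The only genuine obstacle is the slight subtlety in part 1 that the definition of $\pp_s$ is piecewise (off the real axis vs. on it); everything else is a routine transfer of one-variable complex facts, slice by slice, made possible by the commutation of $\pp_I$ with $\overline{\pp}_I$.
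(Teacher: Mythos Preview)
Your proof is correct and supplies exactly the details the paper omits: the paper gives no argument at all for this proposition, merely stating that it ``can be proved as in \cite{advances}.'' Your slice-wise reduction---using $\overline{\pp}_I f_I=0$ to identify $\pp_I f_I$ with $\pp_x f_I$, then the commutation $[\pp_I,\overline{\pp}_I]=0$ for s-monogenicity, and termwise differentiation plus Cauchy--Hadamard for part 2---is precisely the expected route and matches the spirit of the reference.
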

Now we are in need to select $n-1$ unit vectors in $\mathbb{S}$ in order to have a basis
of $\rr_n$ containing the chosen 1-vector $I$.
To this purpose we recall that if $\underline{a}$, $\underline{b}$ are two 1-vectors, then
\begin{equation}\label{provect}
\underline{a}\underline{b}=\langle\underline{a}, \underline{b}\rangle+\underline{a}\wedge \underline{b},
\end{equation}
where $\langle\underline{a}, \underline{b}\rangle$ denotes the scalar product
of $\underline{a}$ and $\underline{b}$ and the wedge product is defined by
$\underline{a}\wedge \underline{b}=\frac{1}{2}(\underline{a} \underline{b}- \underline{b}\underline{a})$.
\begin{proposition}
Let $I=I_1\in\mathbb{S}$. It is possible to choose $I_2,\ldots ,I_n\in\mathbb{S}$ such that
$I_1,\ldots ,I_n$ form a basis for the Clifford algebra $\rr_n$ satisfying the defining relations
$I_rI_s+I_sI_r=-2\delta_{rs}$.
\end{proposition}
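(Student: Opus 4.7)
The plan is to produce $I_2,\dots,I_n$ by a standard Gram--Schmidt orthonormalization in the Euclidean space of $1$-vectors, and then to read off the Clifford relations from formula (\ref{provect}). More precisely, I would identify $\si$ with the Euclidean unit sphere of $\rr^n\cong\rr_n^1$ via $\underline{a}=a_1e_1+\dots+a_ne_n\leftrightarrow (a_1,\dots,a_n)$, and, starting from $I_1$, apply the usual Gram--Schmidt process followed by normalization to obtain unit $1$-vectors $I_2,\dots,I_n\in\si$ with $I_1,\dots,I_n$ pairwise Euclidean-orthogonal of unit norm. This part is standard linear algebra and is never the obstacle.

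The substantive step is to verify that this Euclidean orthonormality is exactly the Clifford relation $I_rI_s+I_sI_r=-2\delta_{rs}$. Applying (\ref{provect}) with $\underline{a}=I_r$, $\underline{b}=I_s$ gives
$$I_rI_s=\langle I_r,I_s\rangle+I_r\wedge I_s,$$
whose symmetric part is exactly $\tfrac12(I_rI_s+I_sI_r)=\langle I_r,I_s\rangle$, since $\underline{a}\wedge\underline{b}$ is antisymmetric by its very definition. A short computation using $e_ie_j+e_je_i=-2\delta_{ij}$ shows that on $1$-vectors the Clifford pairing $\langle\cdot,\cdot\rangle$ coincides with minus the Euclidean inner product; in particular $\langle I_r,I_s\rangle=-\delta_{rs}$, whence $I_rI_s+I_sI_r=-2\delta_{rs}$, as required.

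To conclude that $I_1,\dots,I_n$ form a basis of the full Clifford algebra $\rr_n$ in the sense of generating all $2^n$ basis products $I_A=I_{i_1}\cdots I_{i_r}$, I would invoke the universal property of Clifford algebras: since $I_1,\dots,I_n$ satisfy the defining relations of $\rr_n$, the assignment $e_r\mapsto I_r$ extends uniquely to a unital algebra endomorphism $\varphi\colon\rr_n\to\rr_n$. The orthogonal change of basis from $(e_1,\dots,e_n)$ to $(I_1,\dots,I_n)$ is invertible, so every $e_r$ is a real linear combination of the $I_s$'s; thus $\varphi$ is surjective and hence bijective by dimension, and transports $\{e_A\}$ to the desired basis $\{I_A\}$. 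The one point requiring care is the sign mismatch between the Clifford pairing appearing in (\ref{provect}) and the Euclidean inner product on $\rr^n$; once this is tracked correctly, no genuine obstacle remains.
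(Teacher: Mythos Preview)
Your argument is correct and follows essentially the same route as the paper: identify $1$-vectors with $\rr^n$, apply Gram--Schmidt to produce a Euclidean orthonormal system containing $I_1$, and use (\ref{provect}) together with $\langle I_r,I_s\rangle=-\sum_\ell x_{r\ell}x_{s\ell}$ to convert orthonormality into the Clifford relations. Your additional invocation of the universal property to justify that the products $I_A$ form a linear basis of $\rr_n$ is a welcome detail the paper leaves implicit, but the core idea is the same.
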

\begin{proof}
First of all, note that since $\unx\wedge \uny=-\uny\wedge \unx$, formula (\ref{provect}) gives
$\unx\uny+\uny\unx=2\langle\unx, \uny\rangle$.
Then it sufficient to select the vectors $I_r$ such that $\langle I_s, I_r\rangle=0$ and
$\langle I_r, I_r\rangle=-1$, for $s=1,\ldots,n$, $r=2,\ldots, n$, $s\not= r$. Since $I_r=\sum_{\ell=1}^n x_{r\ell}e_\ell$
we have $\langle I_r, I_r\rangle=-(\sum_{\ell=1}^n x_{r\ell}^2)$ and $\langle I_s, I_r\rangle=
-\sum_{\ell=1}^n x_{r\ell}x_{s\ell}$. By identifying each 1-vector $I_r$ with its components
$(x_1,\ldots ,x_n)\in\rr^n$ we conclude by the Gram Schmidt algorithm.
\end{proof}
\begin{lemma} (Splitting Lemma)
Let $U\subseteq\rr^{n+1}$ be a domain and let $f:\ U\to\rr_n$ be an s-monogenic function. For every choice of $I=I_1\in\mathbb{S}$
let $I_2,\ldots, I_n$ be a completion to an orthonormal basis of $\rr_n$.
Then there exists $2^{n-1}$ holomorphic functions
$F_A\ : U\cap L_I\to L_I$ such that for every $z=x+Iy$
$$
f_I(z)=\sum_{|A|=0}^{n-1} F_A(z) I_A,\quad I_A=I_{i_1}\ldots I_{i_s},
$$
where $A=i_1\ldots i_s$ is a subset of $\{2,\ldots ,n\}$, with ${i_1}<\ldots <{i_s}$,
or, when $|A|=0$, $I_\emptyset=1$.
\end{lemma}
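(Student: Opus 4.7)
\emph{Plan of proof.} The idea is to expand $f_I$ in the standard Clifford basis on $U\cap L_I$, repackage the expansion so that $L_I$-valued coefficients absorb the factor $I_1=I$, and finally read off the Cauchy--Riemann equations for each coefficient from the s-monogenicity of $f$.

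First I use that $\{I_B:B\subseteq\{1,\dots,n\}\}$ is a real vector basis of $\rr_n$ (with $I_B=I_{i_1}\cdots I_{i_k}$ for $B=\{i_1<\cdots<i_k\}$ and $I_\emptyset=1$). On $U\cap L_I$ I therefore write
$$
f_I(z)=\sum_{B\subseteq\{1,\dots,n\}} g_B(z)\,I_B,
$$
with $g_B:U\cap L_I\to\rr$ real differentiable (the only input is real differentiability of $f$). I then separate the indices according to whether $1\in B$: each $B$ is either $A$ or $\{1\}\cup A$ for a unique $A\subseteq\{2,\dots,n\}$, and in the second case $I_B=I\cdot I_A$ since $1$ is smaller than every element of $A$. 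Grouping,
$$
f_I(z)=\sum_{A\subseteq\{2,\dots,n\}}\bigl(g_A(z)+I\,g_{\{1\}\cup A}(z)\bigr)I_A=\sum_A F_A(z)\,I_A,
$$
with $F_A(z):=g_A(z)+I\,g_{\{1\}\cup A}(z)\in L_I$. This exhibits $\rr_n$ as a free left $L_I$-module on the basis $\{I_A\}_{A\subseteq\{2,\dots,n\}}$: every $\xi\in\rr_n$ decomposes uniquely as $\sum_A c_A\,I_A$ with $c_A\in L_I$, because the $2^{n-1}$ elements $I_A$ together with the $2^{n-1}$ elements $I\cdot I_A$ recover the full standard basis $\{I_B\}$.

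For holomorphy I apply $\overline{\pp}_I=\tfrac12(\pp_x+I\pp_y)$ to $f_I=\sum_A F_A\,I_A$. Since $L_I$ is a commutative subalgebra, $I$ commutes with the $L_I$-valued derivatives $\pp_x F_A$ and $\pp_y F_A$, and each $I_A$ is a constant; a term-by-term computation therefore gives
$$
0=\overline{\pp}_I f_I=\sum_A (\overline{\pp}_I F_A)\,I_A,
$$
with each $\overline{\pp}_I F_A\in L_I$. By the uniqueness of the $L_I$-module expansion, $\overline{\pp}_I F_A=0$ for every $A$; writing $F_A=u_A+Iv_A$ this is exactly the classical Cauchy--Riemann system in $L_I\simeq\cc$, so each $F_A:U\cap L_I\to L_I$ is holomorphic, giving the required $2^{n-1}$ functions.

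The main point requiring care is the non-commutativity of $\rr_n$: $I$ anticommutes with $I_A$ when $|A|$ is odd, so the sum must be organised as a left $L_I$-module with the $I_A$'s on the right. The crucial fact is that this module is free, which is precisely what lets the single equation $\overline{\pp}_I f_I=0$ split into the $2^{n-1}$ independent scalar equations $\overline{\pp}_I F_A=0$ needed for the conclusion.
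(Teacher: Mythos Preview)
Your proof is correct and follows essentially the same approach as the paper: group the basis elements $I_B$ according to whether $1\in B$, define $F_A=f_A+f_{1A}I_1$ (your $g_A+I\,g_{\{1\}\cup A}$), apply $\overline{\pp}_I$ term by term, and use linear independence to separate into Cauchy--Riemann systems. The only difference is packaging: the paper extracts the real Cauchy--Riemann equations for $(f_A,f_{1A})$ directly, whereas you phrase the same step as freeness of $\rr_n$ as a left $L_I$-module on $\{I_A\}_{A\subseteq\{2,\dots,n\}}$; these are equivalent formulations of the same argument.
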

\begin{example}{\rm To make clear the notation of the Lemma, we show an
example before to prove the statement. Let us consider the case of $\rr_4$-valued
functions. A function $f$ can be written as
$$
f=f_0+f_1I_1+f_2I_2+f_3I_3+f_4I_4+f_{12}I_{12}+f_{13}I_{13}+f_{14}I_{14}+f_{23}I_{23}
+
$$
$$
f_{24}I_{24}+f_{34}I_{34}+f_{123}I_{123}+f_{124}I_{124}+f_{134}I_{134}+f_{234}I_{234}
+f_{1234}I_{1234}
$$
and grouping as prescribed in the statement of the Lemma, we obtain
$$
f=(f_0+f_1I_1)+(f_2+f_{12}I_1)I_{2}+(f_3+f_{13}I_{1})I_3+(f_4+f_{14}I_{1})I_4
+
$$
$$
+(f_{23}+f_{123}I_{1})I_{23}+(f_{24}+f_{124}I_{1})I_{24}+(f_{34}+f_{134}I_{1})I_{34}+(f_{234}
+f_{1234}I_{1})I_{234}.
$$
}
\end{example}
\begin{proof} The proof closely follows the proof of the analogue result in \cite{advances}. Given a
function $f=\sum f_A I_A$ let us rewrite it by grouping its components as
$$\sum_{|A|=0}^{n-1} (f_A+f_{1A}I_1)I_A,$$
with obvious meaning of the subscript $1A$. Since $f$ is s-monogenic we have
$\left(\displaystyle\frac{\pp}{\pp x}+I_1\displaystyle\frac{\pp}{\pp y}\right)f_{I_1}(x+I_1y)=0$
and so
$$
\sum \left(\displaystyle\frac{\pp}{\pp x}+I_1\displaystyle\frac{\pp}{\pp y}\right)
(f_A+f_{1A}I_1)I_A=\left(\displaystyle\frac{\pp}{\pp x}f_A+I_1\displaystyle\frac{\pp}{\pp y}f_A
+\displaystyle\frac{\pp}{\pp x}f_{1A}I_1 - \displaystyle\frac{\pp}{\pp y} f_{1A}
\right)I_A=0.
$$
Using the fact that the imaginary units commute with the real valued functions,
we obtain:
$$
\left\{%
\begin{array}{ll}
    \displaystyle\frac{\pp}{\pp x}f_A - \displaystyle\frac{\pp}{\pp y} f_{1A}=0 & \hbox{ } \\
   \displaystyle\frac{\pp}{\pp y}f_A
+\displaystyle\frac{\pp}{\pp x}f_{1A}=0 & \hbox{ } \\
\end{array}%
\right.
$$
for all multi-indices $A$, thus all the functions $F_A=(f_A+f_{1A}I_1)$ satisfy the
standard Cauchy-Riemann
system and therefore they are holomorphic.
\end{proof}
\begin{proposition}\label{coeffTaylor}
If $B=B(0,R)\subseteq\rr^{n+1}$ is a ball centered in $0$ with radius $R>0$, then
$f:\ B\to\rr_n$
is an s-monogenic function if and only if  $f$ has a series expansion of the form
\begin{equation}\label{serie}
f(\vx)=\sum_{m\geq 0}\vx^m\frac{1}{m!}\frac{\pp^mf}{\pp x^m}(0)
\end{equation}
converging on $B$.
\end{proposition}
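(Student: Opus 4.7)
The $(\Leftarrow)$ direction is immediate from Remark 2.3: any series $\sum_{m\geq 0}\vx^m a_m$ converging on $B$ is automatically s-monogenic there, so it suffices to establish the converse implication.

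For $(\Rightarrow)$, the plan is to fix $I\in\si$, apply the Splitting Lemma on the disk $B\cap L_I$ of radius $R$ to write $f_I(z)=\sum_{|A|=0}^{n-1}F_A(z)\,I_A$ with each $F_A:B\cap L_I\to L_I$ classically holomorphic, and then expand each $F_A$ as a complex Taylor series converging on all of $B\cap L_I$. Regrouping, and using that the complex derivative on an $L_I$-valued holomorphic function coincides with $\pp_I$ together with the identity $\pp_I(F_AI_A)=(\pp_I F_A)I_A$ (valid because $I_A$ is a constant and the bracket in $\pp_I$ sits to the left of $F_A$), one obtains
\[
f_I(z)=\sum_{m\geq 0}z^m\frac{1}{m!}\,\pp_I^m f_I(0).
\]

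The delicate point, which is the main obstacle, is to identify the Clifford coefficient $\pp_I^m f_I(0)/m!$ with $\pp_x^m f(0)/m!$ in a way \emph{independent} of $I$, so that a single power series represents $f$ on every slice. For this I would exploit s-monogenicity: from $\pp_x f_I=-I\pp_y f_I$ one obtains
\[
\pp_I f_I=\tfrac12(\pp_x-I\pp_y)f_I=\pp_x f_I,
\]
and since the scalar operator $\pp_x$ commutes with $\pp_I$, iteration gives $\pp_I^m f_I=\pp_x^m f_I$ throughout $B\cap L_I$. Evaluating at the real point $z=0$ yields $\pp_I^m f_I(0)=\pp_x^m f(0)$, a Clifford number independent of $I$; this is also consistent with Proposition 2.4.

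It remains to verify global convergence on $B(0,R)$. Setting $a_m:=\pp_x^m f(0)/m!$, the Cauchy--Hadamard formula attaches to the fixed sequence $\{a_m\}$ a radius of convergence $R'=1/\limsup|a_m|^{1/m}$, and the slice-wise expansion already established shows that $\sum z^m a_m$ converges on $\{z\in L_I:|z|<R\}$ for every $I$; in particular on the real segment $(-R,R)$, so $R'\geq R$. Consequently $\sum_{m\geq 0}\vx^m a_m$ converges on the entire ball $B(0,R)$, and since every $\vx\in B(0,R)$ lies on some $L_I$ where the series already agrees with $f_I$, the sum equals $f$ on all of $B(0,R)$, giving exactly the expansion (\ref{serie}).
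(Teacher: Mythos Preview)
Your proof is correct and follows essentially the same route as the paper: Splitting Lemma on each slice, Taylor expansion of the holomorphic components $F_A$, and identification of the coefficients $\partial_I^m f_I(0)$ with $\partial_x^m f(0)$ via s-monogenicity (the paper cites Proposition~\ref{propreg}.1 for this step, while you rederive it directly). The paper obtains the Taylor expansion of each $F_A$ by writing out the Cauchy integral formula explicitly rather than invoking it as a known fact, and it leaves your Cauchy--Hadamard convergence check implicit, but these are cosmetic differences.
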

\begin{proof}
If a function admits a series expansion as in  (\ref{serie}) it is obviously s-monogenic where the series
converges. The converse needs the use of the Splitting lemma and mimics
the proof of Theorem 2.7 in \cite{advances}. Let us consider an element $I=I_1\in\mathbb{S}$ and the corresponding
plane $L_I$. Let $\Delta\subset L_I$ be a disc with center in the origin and radius $r<R$.
The function $f_I$ restriction of $f$ to the plane $L_I$ can be split as $f_I(z)=\sum F_A(z) I_A$,
$z=x+Iy$.
Since every function $F_A$ has values in $L_I$ and is holomorphic,
for any $z\in\Delta$ it admits an integral
representation via the Cauchy formula, i.e.
$$
F_A(z)=\frac{1}{2\pi I}\int_{\pp\Delta(0,r)} \displaystyle\frac{F_A(\zeta)}{\zeta -z}
\, d\zeta,
$$
thus
$$
f_I(z)= \sum_{|A|=0}^{n-1} \left(\frac{1}{2\pi I}\int_{\pp\Delta(0,r)}
\displaystyle\frac{F_A(\zeta)}{\zeta -z}\, d\zeta \right)I_A.
$$
Now observe that as  $\zeta$, $z$ commute being on the same plane $L_I$,
we can expand the denominator in each integral in power series, as in the classical case:
$$
F_A(z)=\frac{1}{2\pi I}\int_{\pp\Delta(0,r)} \sum_{m\geq 0}
\left(\displaystyle\frac{z}{\zeta}\right)^m\displaystyle\frac{F_A(\zeta)}{\zeta}\, d\zeta
$$
$$
=\sum_{m\geq 0}z^m \int_{\pp\Delta(0,r)}
\displaystyle\frac{F_A(\zeta)}{\zeta^{m+1}}\, d\zeta,=\sum_{m\geq 0}z^m\frac{1}{m!}
\frac{\pp^m F_A}{\pp z^m}(0).
$$
Plugging this expression into $f_I(z)=\sum F_A I_A$ we obtain:
$$
f_I(z)=\sum_{|A|=0}^{n-1} \sum_{m\geq 0}z^m\frac{1}{m!}
\frac{\pp^m F_A}{\pp z^m}(0) I_A=\sum_{|A|=0}^{n-1} \sum_{m\geq 0}z^m\frac{1}{m!}
\frac{\pp^m f}{\pp z^m}(0)
$$
and using the definition of s-derivative together with Proposition \ref{propreg}.1,
we get
$$
 \sum_{|A|=0}^{n-1} \sum_{m\geq 0}z^m\frac{1}{m!}
\frac{1}{2}\left(\frac{\pp}{\pp x}-I\frac{\pp}{\pp y}\right)^mf(0)
= \sum_{|A|=0}^{n-1} \sum_{m\geq 0}z^m\frac{1}{m!}
\frac{\pp^m}{\pp x^m}f(0).
$$
Finally observe that the coefficients of the power series do not depend on the choice of
the unit $I$, thus the statement holds for any $I\in\mathbb{S}$.
\end{proof}
\begin{remark}{\rm
The interesting part of Proposition \ref{coeffTaylor} is that, even though the definition of
s-monogenic function depends on the direction of the unit vector $I$,
the coefficients of the series expansion do not depend at all from a choice of $I$.}
\end{remark}
\begin{remark}{\rm
Note that the complex plane $\mathbb{C}$ can be seen both as $\rr^2$ and as the Clifford algebra $\rr_1$. It is immediate to note
that the space of holomorphic functions $f:\ \mathbb{C}\to \mathbb{C}$ coincides with the space
of s-monogenic functions from $g:\ \rr^2\to \rr_1$. For this reason in this paper we will consider the case
of $n>1$ (obviously, all the results we will prove are valid in the case $n=1$, but for holomorphic functions
in one complex variable usually stronger statements hold).}
\end{remark}
Note also that it is possible to interpret holomorphic functions of one complex variable
as a (proper) subset
of the space of s-monogenic functions as shown in the following proposition.
\begin{proposition} Let $I\in\mathbb{S}$ and let us identify $L_I$ with $\mathbb{C}$.
Any holomorphic function $f:\ \Delta (0,R)\subseteq\mathbb{C}\to \cc$ can be extended
(uniquely, up to a choice of an order
for the elements in the basis of $\rr_n$) to an s-monogenic function
$\tilde f: B(0,R)\to\rr_n$.
\end{proposition}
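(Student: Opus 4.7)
The plan is to construct the extension by a direct power series argument. Since $f$ is holomorphic on $\Delta(0,R)$, it admits a Taylor expansion $f(z)=\sum_{m\ge 0} z^m a_m$ with complex coefficients $a_m=\alpha_m+i\beta_m$. Using the identification $L_I\cong\cc$ that sends the chosen unit $I\in\si$ to the complex imaginary $i$, I reinterpret each coefficient as the paravector $a_m=\alpha_m+I\beta_m\in\rr_n$. I then define
\[
\tilde f(\vx) \;:=\; \sum_{m\ge 0} \vx^m a_m, \qquad \vx\in B(0,R).
\]

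Next I would verify the three required properties. For convergence on the whole ball $B(0,R)$, the key observation is that any paravector admits a polar form $\vx=r(\cos\theta+J\sin\theta)$ for some $r=|\vx|$ and some $J\in\si$, whence $|\vx^m|=|\vx|^m$; combined with submultiplicativity of the Clifford norm (with a constant depending only on $n$) and the equality $|a_m|_{\rr_n}=|a_m|_{\cc}$, the radius of convergence of $\sum \vx^m a_m$ in $\rr_n$ is no smaller than that of $\sum z^m a_m$ in $\cc$. s-monogenicity of $\tilde f$ on $B(0,R)$ is then immediate from the Remark already recorded in the paper, to the effect that any power series $\sum \vx^m a_m$ with Clifford coefficients is left s-monogenic on its domain of convergence. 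Finally, for $\vx=x+Iy\in L_I$ both $\vx^m$ and $a_m$ live in the two-dimensional commutative subalgebra $L_I$, so
\[
\tilde f(x+Iy) \;=\; \sum_{m\ge 0}(x+Iy)^m(\alpha_m+I\beta_m),
\]
which under the identification $L_I\cong\cc$ is literally $\sum_{m\ge 0} z^m a_m=f(z)$. Hence $\tilde f$ extends $f$.

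For uniqueness I would invoke Proposition \ref{coeffTaylor}: any s-monogenic $g:B(0,R)\to\rr_n$ has a unique expansion $g(\vx)=\sum_{m\ge 0}\vx^m\frac{1}{m!}\pp_x^m g(0)$ whose coefficients are read off from the $x$-derivatives at the origin. If $g$ agrees with $\tilde f$ on $L_I\cap B(0,R)$, then in particular on the real axis, and so $\frac{1}{m!}\pp_x^m g(0)=a_m$, forcing $g\equiv\tilde f$. The parenthetical caveat ``up to a choice of an order for the elements in the basis of $\rr_n$'' simply reflects the fact that the identification $L_I\cong\cc$ is unique only up to the orientation choice $I\leftrightarrow -I$, and, more generally, that completing $I=I_1$ to an orthonormal basis $I_1,\ldots,I_n$ involves choosing such an ordering.

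The construction itself is essentially automatic once Proposition \ref{coeffTaylor} and the Remark on s-monogenicity of power series are in hand; the only real care is needed in the bookkeeping of the two parallel algebraic structures (complex coefficients of $f$ versus their Clifford images, and the commutative subalgebra $L_I$ versus $\cc$), and this is where I expect the main technical care, rather than a real obstacle, to lie.
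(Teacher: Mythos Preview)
Your proposal is correct and follows essentially the same approach as the paper: expand $f$ in a Taylor series on $\Delta(0,R)$, view the complex coefficients as elements of $L_I\subset\rr_n$ via the identification $i\leftrightarrow I$, and declare $\tilde f(\vx)=\sum_{m\ge 0}\vx^m a_m$. The paper's proof is in fact much terser than yours---it simply writes down this extension and declares it ``obviously'' s-monogenic---whereas you take additional care to verify convergence on all of $B(0,R)$, check that $\tilde f$ genuinely restricts to $f$ on $L_I$, and argue uniqueness via Proposition~\ref{coeffTaylor}; none of this extra detail is wrong or superfluous.
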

\begin{proof} The function $f(z)$ can be expanded in power series as $f(z)=f(x+Iy)=
\sum_{n=0}^{+\infty} (x+Iy)^na_n$, $a_n\in\mathbb{C}\cong L_I$. Suppose to embed $\mathbb{C}$
into the Clifford algebra $\rr_n$ by identifying the imaginary unit $I\in\mathbb{C}$ with
$I_1\in\rr_n$. Then we define $\tilde f(\vx)=\sum_{n=0}^{+\infty} \vx^n a_n$ which is, obviously,
an s-monogenic function.
\end{proof}
\begin{proposition}\label{composition}
\par\noindent 1. The product of two functions $f,g: \ B(0,R)\to\rr_n$ whose series expansions
have real coefficients is an s-monogenic function.
\par\noindent 2. The composition of an s-monogenic function $f: B(0,R)\to\rr_n$
with an s-monogenic function
$g:\ B(0,R')\to\rr_n$ whose series expansion has real coefficients is an s-monogenic
function where it is defined.
\end{proposition}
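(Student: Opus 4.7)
The plan is to reduce both statements to two tools already at our disposal: the Remark that any power series of the form $\sum_{n\geq 0}\vx^na_n$ is s-monogenic in its domain of convergence, and the Splitting Lemma. The key algebraic ingredient throughout is that real scalars commute with every element of the Clifford algebra, in particular with every power of $\vx$.

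For part 1, I would expand $f(\vx)=\sum_{k\geq 0}\vx^ka_k$ and $g(\vx)=\sum_{j\geq 0}\vx^jb_j$ via Proposition \ref{coeffTaylor}, with $a_k,b_j\in\rr$. Since the $a_k$ are real they commute with $\vx^j$, and $\vx^k\vx^j=\vx^{k+j}$, so a formal Cauchy product yields
$$
f(\vx)g(\vx)=\sum_{n\geq 0}\vx^n\Bigl(\sum_{k+j=n}a_kb_j\Bigr).
$$
The rearrangement is legitimate on every closed sub-ball of $B(0,R)$ thanks to absolute convergence and sub-multiplicativity of the Clifford norm. The right-hand side is a power series in $\vx$ with real coefficients, hence s-monogenic by the Remark.

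For part 2, the crucial observation is that if $g(\vx)=\sum_{j\geq 0}\vx^jb_j$ has real coefficients, then $g$ preserves every slice: for $\vx=x+Iy\in L_I$ one has $\vx^j\in L_I$, and multiplication by $b_j\in\rr$ preserves $L_I$, so $g(\vx)\in L_I$. Moreover $g_I=g|_{L_I}$ is holomorphic into $L_I\cong\cc$ because $g$ is s-monogenic. Fixing $I\in\si$, I then apply the Splitting Lemma to $f$ to obtain holomorphic $F_A\colon U\cap L_I\to L_I$ with $f_I(z)=\sum_{|A|=0}^{n-1}F_A(z)I_A$; on the set where the composition is defined,
$$
(f\circ g)_I(\vx)=\sum_{|A|=0}^{n-1}F_A\bigl(g_I(\vx)\bigr)I_A.
$$
Each $F_A\circ g_I$ is the composition of two ordinary holomorphic maps between open subsets of $L_I\cong\cc$ and is therefore holomorphic, so $\ov{\pp}_I$ kills every summand and we obtain $\ov{\pp}_I(f\circ g)_I=0$, which is precisely the s-monogenicity of $f\circ g$.

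The only step that demands genuine work is the reindexing of the double sum in part 1: the Clifford norm is sub-multiplicative rather than multiplicative, but this is already enough to give absolute and uniform convergence on compact subsets of $B(0,R)$ and to legitimize the rearrangement. In part 2 no analytic issue arises beyond verifying that $g(\vx)$ lies in the domain of $f$, a point that is absorbed into the clause \emph{where it is defined}.
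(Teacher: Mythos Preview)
Your argument for part 1 is correct and coincides with the paper's: expand both functions as power series with real coefficients, commute the coefficients past the powers of $\vx$, and collect terms into a Cauchy product.

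For part 2 your argument is also correct but follows a genuinely different route from the paper. The paper stays at the level of power series: writing $f(\vx)=\sum_{m\geq 0}\vx^m a_m$ with $a_m\in\rr_n$ and $g(\vx)=\sum_{r\geq 0}\vx^r b_r$ with $b_r\in\rr$, it observes that in $f(g(\vx))=\sum_{m\geq 0}\bigl(\sum_{r\geq 0}\vx^r b_r\bigr)^m a_m$ the real coefficients $b_r$ can be commuted to the right in every product, so the whole expression rearranges into a single power series $\sum_k \vx^k c_k$ with $c_k\in\rr_n$, hence is s-monogenic. You instead exploit the geometric content of the hypothesis: since $g$ has real coefficients it maps each slice $L_I$ into itself holomorphically, and then the Splitting Lemma for $f$ reduces the s-monogenicity of $f\circ g$ to the classical fact that compositions of holomorphic maps $L_I\to L_I$ are holomorphic. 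The paper's route is shorter and immediately delivers the power series expansion of the composition; your route sidesteps all rearrangement and convergence issues for the composed series and makes transparent \emph{why} the real-coefficient assumption on $g$ is the right one (it is exactly what forces $g$ to preserve slices).
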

\begin{proof}
Suppose that $f(\vx)=\sum_{m\geq 0}\vx^m a_m$ and $g(\vx)=\sum_{r\geq 0}\vx^r b_r$, with $a_m,b_r\in\rr$.
Then $(fg)(\vx)=\sum_{s\geq 0} \vx^s(a_0b_s+a_1b_{s-1}+\ldots+a_sb_0)$ since the coefficients commute with
the variable $\vx$. Now consider $f(g(\vx))$: we have $f(g(\vx))=\sum_{m\geq 0}(\sum_{r\geq 0}\vx^rb_r)^ma_m$.
Since the coefficients $b_r$ commute with the variables we can group them on the right and the statement
follows.
\end{proof}
\begin{corol}
Let $f:\ U\to\rr_n$ be an s-monogenic function and $y_0\in\rr$. Then $f(\vx -y_0)$  is
an s-monogenic function in $U'=\{\vx'=\vx -y_0,\ \vx\in U\}$.
\end{corol}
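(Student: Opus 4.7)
The plan is to reduce the corollary to the one-variable holomorphic setting via the restriction-to-a-slice philosophy that has been used throughout this section. The geometric point on which everything hinges is that, because $y_0\in\rr$ is a purely scalar ($0$-vector) shift, translation by $y_0$ maps every complex line $L_I$ onto itself: a point $x+Iy\in L_I$ goes to $(x-y_0)+Iy\in L_I$. This is exactly what is needed in order for a notion of holomorphy that is tested slice by slice to be preserved.

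Concretely I would fix $I\in\mathbb{S}$, set $h(\vx):=f(\vx-y_0)$ on $U'$, and consider its restriction $h_I$ to $U'\cap L_I$. Writing $h_I(x+Iy)=f_I((x-y_0)+Iy)$ and applying the Cauchy-Riemann operator $\overline{\partial}_I$ via the chain rule for the real affine change of variables $(x,y)\mapsto(x-y_0,y)$, one obtains
$$\overline{\partial}_I h_I(x+Iy)=(\overline{\partial}_I f_I)\bigl((x-y_0)+Iy\bigr)=0,$$
since $f$ is s-monogenic on $U$. Because $I\in\mathbb{S}$ was arbitrary, $h$ is s-monogenic on $U'$.

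A slicker alternative is to invoke Proposition \ref{composition}.2 with the polynomial $g(\vx)=\vx-y_0$, whose (trivial) series expansion has real coefficients, so that $f\circ g$ is s-monogenic. I would prefer to present the direct slice-wise verification above, however, since it avoids having to reconcile the ball domain $B(0,R')$ appearing in Proposition \ref{composition} with the general domain $U'$ here. Either way, the essential and only content of the statement is the real-ness of $y_0$, which keeps each $L_I$ invariant under the shift, so no genuine obstacle arises; the proof is genuinely a one-line check at the slice level.
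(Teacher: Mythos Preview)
Your proposal is correct. The paper itself gives no proof of this corollary: it is placed immediately after Proposition~\ref{composition} and is meant to follow from part~2 of that proposition by taking $g(\vx)=\vx-y_0$, exactly the ``slicker alternative'' you mention. Your direct slice-wise verification is a genuinely different (and more elementary) route: instead of passing through power-series expansions and the composition result, you simply use that the real translation $x\mapsto x-y_0$ commutes with $\overline{\partial}_I$ on each $L_I$. The advantage you identify is real: Proposition~\ref{composition} is stated for functions on balls $B(0,R)$ and proved via series, so invoking it on an arbitrary domain $U$ requires an extra (easy, but unstated) localization argument. Your chain-rule computation works verbatim on any open $U$ and is therefore the cleaner justification of the corollary as stated.
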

\begin{proposition}\label{centerreal}
If $B=B(y_0,R)\subseteq\rr^{n+1}$ is a ball centered in
$y_0\in\rr$ with radius $R>0$, then
$f:\ B\to\rr_n$
is an s-monogenic function if and only if  $f$ has a series expansion of the form
\begin{equation}\label{serietraslata}
f(\vx)=\sum_{m\geq 0}(\vx-y_0)^m\frac{1}{m!}\frac{\pp^mf}{\pp x^m}(y_0)
\end{equation}
\end{proposition}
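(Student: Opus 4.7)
The plan is to reduce to Proposition \ref{coeffTaylor} by translating the center $y_0$ to the origin, which is legitimate precisely because $y_0\in\rr$ and hence commutes with all elements of $\rr_n$ and with the variable $\vx$.

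First I would handle the forward (necessity) direction. Assume $f:B(y_0,R)\to\rr_n$ is s-monogenic. Define $g(\vx):=f(\vx+y_0)$ on the ball $B(0,R)$. Since $y_0$ is real, the Corollary immediately preceding the proposition applies (with $-y_0$ in place of $y_0$) and guarantees that $g$ is s-monogenic on $B(0,R)$. Applying Proposition \ref{coeffTaylor} to $g$ gives
$$
g(\vx)=\sum_{m\geq 0}\vx^m\frac{1}{m!}\frac{\pp^m g}{\pp x^m}(0),
$$
converging on $B(0,R)$. Because translation by a real number commutes with $\pp/\pp x$, we have $\frac{\pp^m g}{\pp x^m}(0)=\frac{\pp^m f}{\pp x^m}(y_0)$. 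Substituting back $\vx\mapsto \vx-y_0$ yields the series representation \eqref{serietraslata}, converging on $B(y_0,R)$.

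For the converse, suppose $f$ is given by the series \eqref{serietraslata} on $B(y_0,R)$. Each monomial $(\vx-y_0)^m a_m$ with $a_m:=\frac{1}{m!}\frac{\pp^m f}{\pp x^m}(y_0)\in\rr_n$ is s-monogenic: indeed, by Remark 2.3 the function $\vx^m a_m$ is s-monogenic on all of $\rr^{n+1}$, and applying the preceding Corollary (with the real translation $y_0$) shows $(\vx-y_0)^m a_m$ is s-monogenic on $B(y_0,R)$. The argument from Proposition \ref{coeffTaylor}, applied again after the change of variable $\vx'=\vx-y_0$, shows that the series has the same radius of convergence as the power series in $\vx'$ obtained from $g$, so it converges uniformly on compact subsets of $B(y_0,R)$. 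Thus the sum is s-monogenic there, and the equality of the derivatives at $y_0$ follows by differentiating term by term using Proposition \ref{propreg}.

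The only place where care is needed is in justifying that the translated variable $\vx-y_0$ behaves exactly like $\vx$: this is the point where the hypothesis $y_0\in\rr$ (as opposed to $y_0\in\rr^{n+1}$) is essential, because only a real translation preserves the slice structure $L_I$ for every $I\in\si$ simultaneously. Once this is observed, no genuine obstacle remains and the proof is essentially a corollary of Proposition \ref{coeffTaylor}.
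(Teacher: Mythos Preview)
Your proof is correct and follows essentially the same approach as the paper: reduce to Proposition~\ref{coeffTaylor} via the real translation $\vec z=\vx-y_0$, using that $y_0\in\rr$ commutes with everything so that s-monogenicity is preserved. The paper's proof is simply a terser version of what you wrote, omitting the explicit verification of both directions and the citations to the preceding Corollary and Remark~2.3.
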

\begin{proof} Consider the transformation of coordinates $\vec z=\vx-y_0$. As the
function $f(\vec z)$ is s-monogenic in a ball centered in the origin with radius $R>0$, we can apply
Proposition \ref{coeffTaylor}. Using the inverse transformation $\vx=\vec z +y_0$, we
obtain the statement.
\end{proof}
\section{Cauchy integral formula and its consequences}
A main result in the theory of monogenic functions is the analogue of the Cauchy integral formula.
In order to state the result for s-monogenic functions we need some notation.
Given an element $\vx=x_0+\unx\in\rr^{n+1}$ let us set
$$
I_{\vx}=\left\{\begin{array}{l}
\displaystyle\frac{\unx}{|\unx|}\quad{\rm if}\ \unx\not=0\\
{\rm any\ element\ of\ } \mathbb{S}{\rm\ otherwise.}\\
\end{array}
\right.
$$
We have the following (compare with \cite{advances}, Theorem 3.5):
\begin{theorem}\label{Cauchyyyy} Let $B=B(0,R)$ be a ball with center in $0$
and radius $R>0$ and let $f:\ B\to\rr_n$ be an s-monogenic function. If $\vx\in B$ then
$$
f(\vx)=\frac{1}{2\pi } \int_{\pp\Delta_{\vx}(0,r)} (\vec \zeta -\vx)^{-1}\,
d\vec\zeta_{I_{\vx}}{f(\vec\zeta)}
$$
where $\vec\zeta\in L_{I_{\vx}} \bigcap B$, $d\vec\zeta_{I_{\vx}}:=-d\vec\zeta{I_{\vx}}$ and $r>0$ is such that
$$
\overline{\Delta_{\vx}(0,r)}=\{x+I_{\vx} y\ |\ x^2+y^2\leq r^2\}
$$
contains $\vx$ and is contained in $B$.
\end{theorem}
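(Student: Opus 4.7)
The strategy is to reduce the theorem to the classical one-variable Cauchy formula by restricting $f$ to the complex line $L_{I_{\vx}}$ through $\vx$, and then reassembling the pieces. Concretely, given $\vx\in B$, the point $\vx$ lies by construction on the plane $L_{I_{\vx}}$. Restricting the ball $B$ to this plane gives a disc of radius $R$ about the origin, inside which we can take a closed sub-disc $\overline{\Delta_{\vx}(0,r)}$ containing $\vx$. Everything will then be proved on this plane, exploiting that $L_{I_{\vx}}$ is a commutative subalgebra isomorphic to $\mathbb{C}$ via $I_{\vx}\leftrightarrow i$.

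Next I would complete $I_1:=I_{\vx}$ to an orthonormal basis $I_1,\dots,I_n$ and invoke the Splitting Lemma to write
$$
f_{I_{\vx}}(z)=\sum_{|A|=0}^{n-1}F_A(z)\,I_A,\qquad z\in \Delta_{\vx}(0,r),
$$
where each $F_A\colon \Delta_{\vx}(0,r)\to L_{I_{\vx}}$ is holomorphic in the usual sense. The classical Cauchy integral formula (using $I_{\vx}$ as the imaginary unit) then yields, for each $A$,
$$
F_A(\vx)=\frac{1}{2\pi I_{\vx}}\int_{\pp\Delta_{\vx}(0,r)}(\vec\zeta-\vx)^{-1}F_A(\vec\zeta)\,d\vec\zeta .
$$
At this point everything inside the integral, namely $(\vec\zeta-\vx)^{-1}$, $F_A(\vec\zeta)$, $d\vec\zeta$, and the prefactor $1/(2\pi I_{\vx})=-I_{\vx}/(2\pi)$, lives in the commutative subalgebra $L_{I_{\vx}}$, so I may freely reorder these factors and absorb the $-I_{\vx}$ into $d\vec\zeta$ to obtain
$$
F_A(\vx)=\frac{1}{2\pi}\int_{\pp\Delta_{\vx}(0,r)}(\vec\zeta-\vx)^{-1}\,d\vec\zeta_{I_{\vx}}\,F_A(\vec\zeta),
$$
using $d\vec\zeta_{I_{\vx}}=-d\vec\zeta\,I_{\vx}$.

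Finally I would multiply on the right by $I_A$ and sum over $A$. Since for each $A$ the scalar expression $(\vec\zeta-\vx)^{-1}d\vec\zeta_{I_{\vx}}$ sits on the left of $F_A(\vec\zeta)$ and is unaffected by the subsequent right multiplication by $I_A$, associativity gives
$$
\sum_A F_A(\vx)I_A=\frac{1}{2\pi}\int_{\pp\Delta_{\vx}(0,r)}(\vec\zeta-\vx)^{-1}\,d\vec\zeta_{I_{\vx}}\sum_A F_A(\vec\zeta)I_A,
$$
which is precisely $f(\vx)=\frac{1}{2\pi}\int(\vec\zeta-\vx)^{-1}d\vec\zeta_{I_{\vx}}f(\vec\zeta)$ since $f_{I_{\vx}}$ agrees with $f$ on $L_{I_{\vx}}$.

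The main obstacle I expect is bookkeeping of non\nobreakdash-commutativity: $I_{\vx}$ does not commute in general with the basis vectors $I_A$, so one has to keep the factor $I_A$ strictly to the right and make sure that the rewriting $1/(2\pi I_{\vx})\,(\cdot)\,d\vec\zeta\rightsquigarrow (\cdot)\,d\vec\zeta_{I_{\vx}}/(2\pi)$ is performed while all the ingredients still live in the commutative subalgebra $L_{I_{\vx}}$, before the $I_A$ is introduced. Everything else (convergence of the Riemann sums, independence on $r$ by the classical Cauchy theorem, and the fact that $\vx,\vec\zeta\in L_{I_{\vx}}$ so that $f=f_{I_{\vx}}$ there) is routine once this ordering is respected.
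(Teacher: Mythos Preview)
Your proposal is correct and follows essentially the same approach as the paper: restrict to the plane $L_{I_{\vx}}$, apply the Splitting Lemma to write $f_{I_{\vx}}=\sum_A F_A I_A$, use the classical Cauchy formula for each holomorphic component $F_A$, and then reassemble by right-multiplying with $I_A$ and summing. In fact you are more explicit than the paper about the ordering issues (keeping $I_A$ on the right and rearranging only within the commutative subalgebra $L_{I_{\vx}}$), which the paper leaves implicit.
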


\begin{proof}
The proof is based on the Splitting Lemma.
Consider the integral
$$
\frac{1}{2\pi}\int_{\partial \Delta_{\vx}(0,r)} ({\vec\zeta}-\vx)^{-1}d{\vec\zeta}_{I_{\vx}}
{f({\vec\zeta})}
$$
set $I_{\vx}:=I_1$, complete to a basis $I_1,\ldots ,I_n$ of the Clifford algebra $\rr_n$, and
set $f_{I_{\vx}}=\sum_A F_AI_A$. We have:
$$
\frac{1}{2\pi}\int_{\partial
\Delta_{\vx}(0,r)}({\vec\zeta} - \vx )^{-1}d{\vec\zeta}_{I_{\vx}}  {f_{I_{\vx}}({\vec\zeta})}
$$
$$
=\frac{1}{2\pi}\int_{\partial \Delta_{\vx}(0,r)}
({\vec\zeta} - \vx )^{-1}d{\vec\zeta}_{I_{\vx}} \sum_A F_A({\vec\zeta} )I_A
$$
$$
=
\sum_A\frac{1}{2\pi}\int_{\partial \Delta_{\vx}(0,r)}
({\vec\zeta} - \vx )^{-1}d{\vec\zeta}_{I_{\vx}} F_A({\vec\zeta})I_A =\sum_A  F_A(\vx)I_A = f(\vx ).
$$
\end{proof}
\begin{remark}{\rm
Let $B_1=B(0,R_1)$,  $B_2=B(0,R_2)$ be two balls centered in the origin and with radii $0<R_1<R_2$.
The same argument used in the previous proof shows that if a function $f$ is s-monogenic in
a neighborhood of the
annular domain $B_2\setminus B_1$, then for any $\vx\in B_2\setminus B_1$, it  is
$$
f(\vx)=\frac{1}{2\pi } \int_{\pp B_2\cap L_{I_{\vx}}} (\vec\zeta -\vx)^{-1}\,
d\vec\zeta_{I_{\vx}}{f(\vec\zeta)}-\frac{1}{2\pi } \int_{\pp B_1\cap L_{I_{\vx}}} (\vec\zeta -\vx)^{-1}\,
d\vec\zeta_{I_{\vx}}{f(\vec\zeta)}.
$$}
\end{remark}
\begin{remark}{\rm
The function $\mathscr{I}_{\vec y}(\vx)= (\vec x -\vec y)^{-1}$
corresponding to the Cauchy kernel is
not  s-monogenic on $\rr^{n+1}\setminus\{\vec y\}$, unless
$\vec y=y_0\in\rr$. In particular, the function
$\mathscr{I}(\vx)=\vx^{-1}=\displaystyle\frac{\bar
\vx}{|\vx|^2}$, where $\bar \vx=x_0-\unx$, is s-monogenic in $\rr^{n+1}\setminus \{0\}$. Note also that $\mathscr{I}_{y_0}(\vx)$ can be expanded
in power series as $\mathscr{I}_{y_0}(\vx)=\sum_{n\geq 0} y_0^n\vx^{-n-1}$.
}
\end{remark}
\begin{theorem}(Cauchy formula outside a ball) Let $B=B(0,R)$ and
let $B^c=\rr^{n+1}\setminus \overline{B}$. Let $f:\ B^c\to\rr_n$ be an s-monogenic
function with $\lim_{\vx\to\infty}f(\vx)=a$.
If $\vx\in {B}^c$ then
$$
f(\vx)=a-\frac{1}{2\pi}\int_{\pp\Delta_{\vx}(0,r)} ({\vec\zeta} -\vx)^{-1}\, d{\vec\zeta}_{ I_{\vx}}
{f({\vec\zeta})}
$$
where ${\vec\zeta}\in L_{I_{\vx}} \bigcap B^c$, $d{\vec\zeta}_{I_{\vx}}=-d{\vec\zeta}{I_{\vx}}$,
 $0<R<r<|\vx|$ and the complement of the set
 $\overline{\Delta_{\vx}(0,r)}$
 is contained in ${B}^c$ and contains $\vx$.
\end{theorem}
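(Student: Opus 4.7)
The plan is to mimic the proof of Theorem \ref{Cauchyyyy} but with the exterior version of the classical complex Cauchy formula replacing the interior one. Choose $I_{\vx}\in\mathbb{S}$ as prescribed and complete it to an orthonormal basis $I_1=I_{\vx},I_2,\ldots,I_n$ of $\rr_n$. The restriction $f_{I_{\vx}}$ is defined on the open subset $B^c\cap L_{I_{\vx}}$, which is the complement of a closed disc of radius $R$ inside the plane $L_{I_{\vx}}\cong\mathbb{C}$. By the Splitting Lemma (applied on any relatively compact subdomain of $B^c\cap L_{I_{\vx}}$, so there is no circularity), we may write $f_{I_{\vx}}=\sum_{|A|=0}^{n-1} F_A\, I_A$, where each $F_A$ is a holomorphic function from $B^c\cap L_{I_{\vx}}$ into $L_{I_{\vx}}$.

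First I would verify that each $F_A$ has a finite limit at infinity inside $L_{I_{\vx}}$. This uses the uniqueness of the Clifford decomposition: writing $a=\sum_A a_A I_A$ with $a_A\in L_{I_{\vx}}$, the hypothesis $f(\vx)\to a$ forces $F_A(\vec\zeta)\to a_A$ as $|\vec\zeta|\to\infty$ inside $L_{I_{\vx}}$, because the $F_A$'s are just a regrouping of the real components of $f$ and those converge componentwise.

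Next, for each $F_A$ I would invoke the classical (one-variable) Cauchy formula outside a disc: if $F_A$ is holomorphic on $\{|\vec\zeta|>R\}\subset L_{I_{\vx}}$ with $F_A(\vec\zeta)\to a_A$ at infinity, then for any $r$ with $R<r<|\vx|$ such that $\vx$ lies in the unbounded component of $L_{I_{\vx}}\setminus\overline{\Delta_{\vx}(0,r)}$,
$$
F_A(\vx)=a_A-\frac{1}{2\pi}\int_{\pp\Delta_{\vx}(0,r)} (\vec\zeta-\vx)^{-1}\,d\vec\zeta_{I_{\vx}}\,F_A(\vec\zeta).
$$
This is the standard identity proved by applying the interior Cauchy formula on the annulus bounded by $\pp\Delta_{\vx}(0,r)$ and a large circle $\pp\Delta_{\vx}(0,R')$, and then letting $R'\to\infty$: the integral over the outer circle splits as $F_A(\vx)=a_A+o(1)$, since $F_A-a_A$ tends to $0$ at infinity and $\vx$ sits in the interior.

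Finally, multiplying the above identity on the right by $I_A$ and summing over $A$ yields
$$
f(\vx)=\sum_{|A|=0}^{n-1} F_A(\vx)\, I_A=\sum_A a_A I_A-\frac{1}{2\pi}\int_{\pp\Delta_{\vx}(0,r)}(\vec\zeta-\vx)^{-1}\,d\vec\zeta_{I_{\vx}}\sum_A F_A(\vec\zeta)I_A
=a-\frac{1}{2\pi}\int_{\pp\Delta_{\vx}(0,r)}(\vec\zeta-\vx)^{-1}\,d\vec\zeta_{I_{\vx}}\,f(\vec\zeta),
$$
which is the claim. The only genuinely delicate point is the limit step in the scalar auxiliary result: one needs the decay $F_A(\vec\zeta)-a_A\to 0$ to dominate $(\vec\zeta-\vx)^{-1}d\vec\zeta_{I_{\vx}}$ (which is $O(1)$ on circles of radius $R'$), and this follows from the hypothesis $\lim_{\vx\to\infty}f(\vx)=a$, which yields uniform smallness of $F_A-a_A$ on the circle $|\vec\zeta|=R'$ as $R'\to\infty$. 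Everything else is bookkeeping identical to the proof of Theorem \ref{Cauchyyyy}.
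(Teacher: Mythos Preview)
Your proof is correct and follows essentially the same route as the paper: reduce to the plane $L_{I_{\vx}}$ via the Splitting Lemma, apply the Cauchy formula on an annulus bounded by $\partial\Delta_{\vx}(0,r)$ and a large circle of radius $r'$, and let $r'\to\infty$ so that the outer integral yields $a_A$. The only cosmetic difference is ordering: the paper first writes the annulus Cauchy formula for $f$ itself (using the remark after Theorem~\ref{Cauchyyyy}) and then splits into components, whereas you split first and invoke the classical one-variable exterior formula componentwise; your handling of the limit step is in fact slightly more explicit than the paper's.
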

\begin{proof}
The proof is based on the Splitting Lemma. Let $\vx\in\rr^{n+1}\setminus \overline{B}$ and the
corresponding imaginary unit $\vx$.
Consider $r'>r>R$, and the disks
$\Delta=\Delta_{\vx}(0,r)$, $\Delta'=\Delta_{\vx}(0,r')$ on the plane $L_{I_{\vx}}$ having radius $r$ and $r'$
respectively such that $\Delta'\ni\vx$.
Since $f$ is s-monogenic on $\Delta'\setminus\Delta$ we can apply
the Cauchy formula in
$\overline{\Delta'}\setminus \Delta$ to compute $f(\vx)$. We obtain:
$$
f(\vx)=\frac{1}{2\pi}\int_{\partial \Delta'\setminus\partial \Delta} ({\vec\zeta}-\vx)^{-1}d{\vec\zeta}_{I_{\vx}}
{f({\vec\zeta})}=\frac{1}{2\pi}\int_{\partial \Delta'} ({\vec\zeta}-\vx)^{-1}d{\vec\zeta}_{I_{\vx}}
{f({\vec\zeta})}-\frac{1}{2\pi}\int_{\partial \Delta} ({\vec\zeta}-\vx)^{-1}d{\vec\zeta}_{I_{\vx}}
{f({\vec\zeta})}.
$$
Let us set $I_{\vx}:=I_1$ and complete to a basis $I_1,\ldots ,I_n$ of the Clifford algebra $\rr_n$.
The Splitting Lemma gives $f_{I_{\vx}}=\sum_A F_AI_A$ and we can write:
$$
\frac{1}{2\pi}\int_{\partial \Delta'}
({\vec\zeta} - \vx )^{-1} d{\vec\zeta}_{I_{\vx}} {f({\vec\zeta})}
-\frac{1}{2\pi}\int_{\partial \Delta}
({\vec\zeta} - \vx )^{-1} d{\vec\zeta}_{I_{\vx}} {f({\vec\zeta})}
$$
$$
=
\sum_A\frac{1}{2\pi}\int_{\partial \Delta'}
({\vec\zeta} - \vx )^{-1}d{\vec\zeta}_{I_{\vx}} F_A({\vec\zeta})I_A -
\sum_A\frac{1}{2\pi}\int_{\partial \Delta}
({\vec\zeta} - \vx )^{-1}d{\vec\zeta}_{I_{\vx}} F_A({\vec\zeta})I_A .
$$
Let us now consider a single component $F_A$ at a time. By computing the integral on $\pp\Delta'$
in spherical coordinates, and letting $r'\to\infty$ we obtain that the integral equals
$a_A=\lim_{r'\to\infty}F_A$, therefore:
$$
F_A(\vx)=a_A -
\frac{1}{2\pi}\int_{\partial \Delta}
({\vec\zeta} - \vx )^{-1}d{\vec\zeta}_{I_{\vx}} F_A({\vec\zeta})I_A=a_A-F_A(\vx).
$$
Taking the sum of the various components multiplied with the corresponding
units $I_A$ we get the statement with $a=\sum_A a_AI_A$.
\end{proof}
\begin{theorem} (Cauchy estimates) Let $B=B(0,R)$ be the ball centered in $0$ having radius $R>0$.
Let $f:\ B \ \to \rr_n$ be an s-monogenic function, $I\in\mathbb{S}$ and $0<r<R$.
Set $\pp \Delta_I(0,r)=\{(x+Iy)\ |\ x^2+y^2=r^2 \}$, $M_I=\max \{ |f(\vx)|\ | \vx\in \pp \Delta_I(0,r)\}$
and $M=\inf\{M_I\ |\ I\in\mathbb{S}\}$. Then
$$
\frac{1}{n!}\left|\frac{\pp^n f}{\pp x^n}(0)\right|\leq \frac{M}{r^n}, \quad n\geq 0.
$$
\end{theorem}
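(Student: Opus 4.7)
The plan is to derive a slicewise integral representation for the coefficient $a_n:=\frac{1}{n!}\frac{\pp^n f}{\pp x^n}(0)$ (which by Proposition~\ref{coeffTaylor} is a bona fide Taylor coefficient, independent of the chosen direction $I$), estimate it on each slice $L_I$ against $M_I$, and finally take the infimum over $I\in\si$.

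I would start from the Cauchy integral formula (Theorem~\ref{Cauchyyyy}) applied at $\vx=x\in\rr$, where $I_{\vx}$ can be chosen freely; for any $I\in\si$ this reads
$$
f(x)=\frac{1}{2\pi}\int_{\pp\Delta_I(0,r)}(\vec\zeta-x)^{-1}\,d\vec\zeta_I\,f(\vec\zeta).
$$
Since $x\in\rr$ commutes with every $\vec\zeta\in L_I$, the kernel $(\vec\zeta-x)^{-1}$ may be differentiated $n$ times with respect to $x$ inside the integral, and setting $x=0$ gives
$$
a_n=\frac{1}{2\pi}\int_{\pp\Delta_I(0,r)}\vec\zeta^{-n-1}\,d\vec\zeta_I\,f(\vec\zeta).
$$
Parametrizing $\vec\zeta=re^{I\theta}$ and using the routine identities $d\vec\zeta_I=re^{I\theta}d\theta$ and $\vec\zeta^{-n-1}=r^{-n-1}e^{-I(n+1)\theta}$ collapses this to
$$
a_n=\frac{1}{2\pi r^n}\int_0^{2\pi}e^{-In\theta}f(re^{I\theta})\,d\theta.
$$

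The only non-routine step, and hence the main obstacle, is the pointwise bound $|e^{-In\theta}f(re^{I\theta})|\leq |f(re^{I\theta})|$ for the Euclidean Clifford norm $|a|^2=\sum_A a_A^2$. I would handle it by showing that left multiplication by $e^{-In\theta}=\cos(n\theta)-\sin(n\theta)I$ is an orthogonal operator on $\rr_n$: left multiplication by the unit $1$-vector $I$ permutes the canonical basis $\{I_A\}$ up to signs and is therefore orthogonal, and since $L_I^2=-\mathrm{id}$ the operator $L_I$ is moreover skew, so that $L_{e^{-In\theta}}=\cos(n\theta)\mathrm{id}-\sin(n\theta)L_I=\exp(-n\theta L_I)$ is again orthogonal. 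With this in hand, $|e^{-In\theta}f(re^{I\theta})|=|f(re^{I\theta})|\leq M_I$, and passing the absolute value inside the integral produces $|a_n|\leq M_I/r^n$; taking the infimum over $I\in\si$ yields the stated bound.
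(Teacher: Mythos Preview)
Your argument is correct. The paper's own proof is a one-line appeal to the Splitting Lemma and the classical complex Cauchy estimates: one writes $f_I=\sum_A F_A I_A$ with each $F_A:L_I\to L_I$ holomorphic, applies the complex estimate componentwise, and recombines (implicitly using that $\{I_A\}$ is orthonormal, so that $|a_n|^2=\sum_A|F_A^{(n)}(0)/n!|^2$ and $|f_I(z)|^2=\sum_A|F_A(z)|^2$). You take a different route: you work directly with the Clifford-valued Cauchy integral of Theorem~\ref{Cauchyyyy}, differentiate at a real point, and estimate the full integrand in one stroke via the orthogonality of $L_{e^{-In\theta}}$. Both approaches ultimately rest on the same algebraic fact---compatibility of the Euclidean norm on $\rr_n$ with left multiplication by unit elements of $L_I$---but yours isolates it explicitly and avoids splitting into $2^{n-1}$ scalar pieces, while the paper's sketch leaves this point hidden inside ``the corresponding proof in the complex case''. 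One small refinement: your observation that $L_I$ permutes $\{I_A\}$ up to signs yields orthogonality for the inner product in which $\{I_A\}$ is orthonormal; to conclude for the standard Euclidean norm defined via $\{e_A\}$ you are tacitly using that $\{I_A\}$ is itself Euclidean-orthonormal. This is true (orthogonal changes of the generators induce orthogonal automorphisms of $\rr_n$), or one can bypass it entirely by computing $L_I^TL_I=-\sum_{i,j}x_ix_jL_{e_ie_j}=\mathrm{id}$ directly from $e_ie_j+e_je_i=-2\delta_{ij}$ and the fact that each $L_{e_i}$ is an orthogonal involution up to sign.
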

\begin{proof}
The result easily follows from the Splitting Lemma and the corresponding proof in the complex case.
\end{proof}
Using the previous result it is immediate to show the following
\begin{theorem}(Liouville) Let $f:\ \rr^{n+1}\to\rr_n$ be an entire s-monogenic function.
If $f$ is bounded then $f$ is constant on $\mathbb{R}^{n+1}$.
\end{theorem}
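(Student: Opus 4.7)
The plan is to combine the Cauchy estimates with the Taylor series representation from Proposition \ref{coeffTaylor}. Since $f$ is entire s-monogenic, Proposition \ref{coeffTaylor} (applied on every ball $B(0,R)$, letting $R\to\infty$) gives a globally convergent power series expansion
\[
f(\vx)=\sum_{m\geq 0}\vx^{m}\frac{1}{m!}\frac{\pp^{m}f}{\pp x^{m}}(0).
\]
Hence it suffices to show that $\frac{\pp^{m}f}{\pp x^{m}}(0)=0$ for every $m\geq 1$.

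Next, I would use boundedness: by hypothesis there is a constant $C>0$ with $|f(\vx)|\leq C$ for all $\vx\in\rr^{n+1}$. For any $I\in\mathbb{S}$ and any $r>0$, the quantity $M_I=\max\{|f(\vx)|\,:\,\vx\in\pp\Delta_I(0,r)\}$ appearing in the Cauchy estimates is then bounded by $C$, and so $M=\inf\{M_I\,:\,I\in\mathbb{S}\}\leq C$. The Cauchy estimates therefore yield, for every $m\geq 0$ and every $r>0$,
\[
\frac{1}{m!}\left|\frac{\pp^{m}f}{\pp x^{m}}(0)\right|\leq \frac{C}{r^{m}}.
\]

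Finally, for $m\geq 1$ I would let $r\to+\infty$ (which is allowed because $f$ is entire, so the Cauchy estimates hold on arbitrarily large balls). The right-hand side tends to $0$, forcing $\frac{\pp^{m}f}{\pp x^{m}}(0)=0$ for every $m\geq 1$. Substituting back into the series gives $f(\vx)\equiv \frac{\pp^{0}f}{\pp x^{0}}(0)=f(0)$, a constant in $\rr_{n}$. There is no real obstacle here; the only point that requires mild care is noting that the Cauchy estimate constant $M$ can be replaced by the global bound $C$ uniformly in $r$ and $I$, which is immediate from the definitions of $M_I$ and $M$.
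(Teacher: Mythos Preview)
Your proof is correct and follows essentially the same route as the paper: apply the Cauchy estimates with the global bound, let $r\to+\infty$ to kill all positive-order Taylor coefficients at $0$, and conclude via the power series expansion that $f$ is constant. The paper's version is slightly terser (it does not spell out that $M\le C$ or explicitly invoke Proposition~\ref{coeffTaylor}), but the argument is the same.
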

\begin{proof}
Suppose that $|f|\leq M$ on $\rr^{n+1}$. By the previous theorem we have:
$$
\frac{1}{n!}\left|\frac{\pp^n f}{\pp x^n}(0)\right|\leq \frac{M}{r^n}, \quad n\geq 0,
$$
and letting $r\to+\infty$ we obtain $\frac{\pp^n f}{\pp x^n}(0)=0$ for any $n>0$ and this implies
$f(\vx)=c$, $c\in\rr_n$.
\end{proof}
\begin{corol}
Let $f:\ \rr^{n+1}\to\rr_n$ be an entire s-monogenic function.
If $\lim_{\vx\to \infty}f$ exists then $f$ is constant on $\mathbb{R}^{n+1}$.
\end{corol}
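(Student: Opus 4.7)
The plan is to reduce this corollary directly to the preceding Liouville theorem by proving that the existence of the limit at infinity forces $f$ to be bounded on all of $\rr^{n+1}$. Once boundedness is established, the Liouville theorem immediately yields the conclusion, so the only work is to verify the boundedness hypothesis.

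First I would name the limit: set $a = \lim_{\vx \to \infty} f(\vx) \in \rr_n$. By the definition of limit at infinity (using the Clifford norm $|\cdot|$ on $\rr_n$), applied for instance with $\varepsilon = 1$, there exists some $R > 0$ such that $|f(\vx) - a| < 1$ for every $\vx \in \rr^{n+1}$ with $|\vx| > R$. The triangle inequality then yields $|f(\vx)| < |a| + 1$ on the exterior $\{|\vx| > R\}$.

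Next I would handle the complementary compact set $\overline{B(0,R)}$. Since $f$ is s-monogenic it is in particular real differentiable and therefore continuous on $\rr^{n+1}$, so $|f|$ attains a finite maximum $M'$ on the compact ball $\overline{B(0,R)}$. Setting $M = \max\{|a|+1,\, M'\}$, we obtain $|f(\vx)| \leq M$ for all $\vx \in \rr^{n+1}$.

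Finally I would invoke the Liouville theorem proved just above: an entire bounded s-monogenic function is constant, hence $f \equiv c$ for some $c \in \rr_n$. There is no real obstacle here; the only subtlety worth flagging is that the notion of limit at infinity must be interpreted with respect to the norm on $\rr_n$, so that the inequality $|f(\vx) - a| < \varepsilon$ makes sense and propagates correctly to a Clifford-algebra bound on $|f|$. Once this is understood, the argument is entirely routine.
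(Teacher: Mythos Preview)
Your proof is correct, and it is precisely the intended argument: the paper states this result as an unproved corollary of the Liouville theorem, and your boundedness argument (limit at infinity controls $|f|$ outside a large ball, continuity controls it on the compact ball) is the standard way to fill in that step.
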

\begin{theorem}\label{int_nullo}
Let $U$ be an open set in $\rr^{n+1}$.
If $f:\ U\to\rr_n$ is s-monogenic then for every $I\in\mathbb{S}$ and for any closed, simple
curve $\gamma_I\subset U\cap L_I$ we have
$$
\int_{\gamma_I} d\vx f(\vx) =0.
$$
\end{theorem}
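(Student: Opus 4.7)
The plan is to reduce the integral identity on the slice $L_I$ to the classical Cauchy theorem in one complex variable, using the Splitting Lemma as the bridge. Fix $I\in\si$ and complete it to an orthonormal basis $I_1=I,I_2,\ldots,I_n$ of $\rr_n$. Since $f$ is s-monogenic on $U$, the Splitting Lemma applied on $U\cap L_I$ provides holomorphic functions $F_A\colon U\cap L_I\to L_I$ with
\[
f_I(z)=\sum_{|A|=0}^{n-1} F_A(z)\,I_A,\qquad z=x+Iy.
\]

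Next I would parametrize the curve $\gamma_I\subset U\cap L_I$. Along $\gamma_I$ the differential is $d\vx=dx+I\,dy$, which lies in $L_I$. The key observation is that $L_I\cong\cc$ is a commutative subfield of $\rr_n$, so $d\vx$ commutes with every value $F_A(z)\in L_I$; on the other hand the constants $I_A$ do \emph{not} commute with $d\vx$ in general, but they are fixed multivectors and can simply be factored outside the integral on the right. This yields
\[
\int_{\gamma_I} d\vx\,f(\vx)=\sum_{|A|=0}^{n-1}\left(\int_{\gamma_I} d\vx\,F_A(z)\right)I_A
=\sum_{|A|=0}^{n-1}\left(\int_{\gamma_I} F_A(z)\,dz\right)I_A.
\]

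The last step is to invoke the classical Cauchy theorem: since each $F_A$ is holomorphic on $U\cap L_I$ and $\gamma_I$ is a closed simple curve therein (bounding a region contained in $U\cap L_I$), every integral $\int_{\gamma_I}F_A(z)\,dz$ vanishes. Summing over $A$ gives $\int_{\gamma_I}d\vx\,f(\vx)=0$, and since $I\in\si$ was arbitrary the proof is complete.

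The only subtle point, and hence the only thing I would pause to justify carefully, is the commutativity step that lets me rewrite $d\vx\,F_A(z)$ as $F_A(z)\,dz$: it rests on the fact that both $d\vx$ and $F_A(z)$ live in the commutative plane $L_I$. Once this is established, the argument is essentially a component-wise appeal to one-variable complex analysis.
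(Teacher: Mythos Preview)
Your proof is correct and follows exactly the approach the paper indicates: apply the Splitting Lemma to decompose $f_I$ into holomorphic $L_I$-valued components $F_A$, then invoke the classical Cauchy theorem componentwise. The paper's own proof is a one-line reference to ``the Splitting Lemma and the analogue result for holomorphic functions,'' so your write-up simply makes those details explicit (including the commutativity of $d\vx$ with $F_A(z)$ in $L_I$), which is precisely what the paper intends.
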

\begin{proof} It is an easy consequence of the Splitting Lemma and the analogue result for
holomorphic functions.\end{proof}
In principle, the analogue of the main theorems which hold for holomorphic functions can be proved
also in this setting, with minor changes in their proofs. We mention below some of them:
\begin{theorem} (Identity principle).
Let $U$ be an open set in $\rr^{n+1}$ such that $U\cap \rr$ has an accumulation point.
Let $f: U\ \to \rr_n$ be an s-monogenic function,
and $Z$ the set of its zeroes. If there is an imaginary unit $I$
such that $L_I \cap Z$ has an accumulation point, then $f\equiv 0$ on $U$.
\end{theorem}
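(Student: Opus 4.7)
The plan is to reduce the claim to the classical identity principle for holomorphic functions via the Splitting Lemma, then to push the vanishing through a real interval by means of Proposition \ref{centerreal}, and finally to propagate it slice by slice to every direction.

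First I would fix the imaginary unit $I$ provided by the hypothesis and apply the Splitting Lemma on $U\cap L_I$, writing $f_I=\sum_{|A|=0}^{n-1}F_A\,I_A$ with each $F_A\colon U\cap L_I\to L_I$ holomorphic. Since the basis elements $I_A$ are $\rr$-linearly independent, a point $z\in L_I$ lies in $Z$ precisely when every $F_A$ vanishes at $z$. Hence each $F_A$ vanishes on $L_I\cap Z$, whose zero set, by hypothesis, admits an accumulation point. The classical one-variable identity theorem then forces every $F_A$ to vanish on the connected component $\Omega\subseteq U\cap L_I$ containing that accumulation point, so $f\equiv 0$ on $\Omega$.

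Next I would bridge $\Omega$ to the real axis. Using the hypothesis that $U\cap\rr$ has an accumulation point, together with the slice-domain structure tacit in the statement, I would pick a point $y_0\in\Omega\cap\rr$. By Proposition \ref{centerreal} there is a ball $B(y_0,R)\subseteq U$ on which
$$
f(\vx)=\sum_{m\geq 0}(\vx-y_0)^m\frac{1}{m!}\frac{\pp^m f}{\pp x^m}(y_0).
$$
Since $f$ is identically zero on the real interval $\Omega\cap\rr$ through $y_0$, every coefficient $\frac{1}{m!}\frac{\pp^m f}{\pp x^m}(y_0)$ must vanish; hence $f\equiv 0$ on the whole ball $B(y_0,R)$.

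Finally I would spread the vanishing in every direction. For any $J\in\si$ and any connected component $\Omega_J$ of $U\cap L_J$ that meets the real interval produced above, the zero set of $f_J$ contains a nontrivial real sub-interval and therefore admits an accumulation point inside $\Omega_J$. A second application of the Splitting Lemma in the direction $J$, combined once more with the classical identity theorem, yields $f\equiv 0$ on $\Omega_J$; covering $U$ by the slice components meeting $\rr$ then gives $f\equiv 0$ on all of $U$. The main obstacle is precisely the bridging step: one must guarantee that the open set $\Omega\subseteq L_I$ actually intersects the real axis inside $U$, which forces one to interpret the hypothesis on $U$ as a slice-domain condition. Once this topological point is granted, the rest is a mechanical slice-by-slice reduction to the classical one-variable identity theorem.
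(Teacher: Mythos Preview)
Your proposal is correct and follows essentially the same strategy as the paper: apply the Splitting Lemma on $L_I$, invoke the classical identity theorem for each holomorphic component $F_A$, transfer the vanishing to the real axis, and then repeat the splitting argument on every other slice $L_{I'}$. The paper's proof is slightly more direct in that it omits your detour through Proposition~\ref{centerreal}: once $f_I\equiv 0$ on its slice, it simply notes that $f$ vanishes on $U\cap\rr$, which by hypothesis already has an accumulation point, and immediately reapplies the Splitting Lemma plus the one-variable identity theorem on each $L_{I'}$. Both arguments tacitly assume the slice-domain connectivity you flag as the main obstacle; the paper is no more explicit about it than you are.
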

\begin{proof}
Let us consider the restriction $f_I$ of $f$ to the line $L_I$. By the Splitting Lemma we have
$$
f_I(z)=\sum_{|A|=0}^{n-1} F_A I_A
$$
with $F_A$ holomorphic for every multi-index $A$.
Since $L_I\cap Z$ has an accumulation point,
we deduce that all the functions $F_A$ vanish, thus $f_I=0$. In particular $f_I$ vanishes in
the points of $U$ on the real axis. Any other plane $L_{I'}$ is such that $f_{I'}$ vanishes
on $U\cap\rr$ which has an accumulation point, thus its components $F_{A'}$ vanish on $U\cap \rr$. This fact implies that also
$f_{I'}$ vanish on $L_{I'}$, thus $f\equiv 0$ on $U$.
\end{proof}
\begin{corol}
Let $U$ be an open set in $\rr^{n+1}$ such that $U\cap \rr$ has an accumulation point.
Let $f,g: U\ \to \rr_n$ be s-monogenic functions. If there is an imaginary unit $I$
such that $f= g$ on a subset of $L_I$ having an accumulation point, then $f\equiv g$ on $U$.
\end{corol}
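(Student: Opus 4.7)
The plan is to reduce this corollary immediately to the Identity principle (the preceding theorem) by considering the difference $h := f - g$. By Remark~2.2, the set $\Er(U)$ of s-monogenic functions on $U$ is a right module over $\rr_n$; in particular it is closed under addition, so $h$ is s-monogenic on $U$.

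Next I would translate the hypothesis into a statement about the zero set $Z$ of $h$. Since $f = g$ on a subset $S \subseteq L_I$ with an accumulation point, the set $S$ is contained in $Z$, and therefore $L_I \cap Z$ contains $S$ and hence has an accumulation point as well. The standing assumption that $U \cap \rr$ has an accumulation point is inherited directly.

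Therefore the hypotheses of the Identity principle apply to $h$, yielding $h \equiv 0$ on $U$, i.e. $f \equiv g$ on $U$. There is no real obstacle here; the only thing worth being careful about is invoking the right-module structure from Remark~2.2 to ensure that $f-g$ is indeed s-monogenic, which is why the reduction to the previous theorem is legitimate. The proof is a one-line consequence and should be stated as such.
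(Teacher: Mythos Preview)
Your proposal is correct and matches the paper's approach exactly: the paper states this result as a corollary with no proof, treating it as an immediate consequence of the Identity principle applied to $h=f-g$. Your care in citing Remark~2.2 to justify that $h$ is s-monogenic is appropriate, and the rest is a direct invocation of the preceding theorem.
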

\begin{corol}
Let $B=B(x_0,R)$, $x_0\in\rr$, and $f,g:\ B\ \to \rr_n$ be s-monogenic functions. If there exists
$I\in\mathbb{S}$ such that $f= g$ on
a subset of $L_I\cap B$ having an accumulation point, then $f\equiv g$ on $B$.
\end{corol}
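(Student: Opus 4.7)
The plan is to reduce this corollary immediately to the previous one (the two-function version of the identity principle). The only hypothesis of that corollary that is not literally assumed here is that $U \cap \rr$ have an accumulation point. But in our setting $U = B(x_0, R)$ with $x_0 \in \rr$ and $R > 0$, so $B \cap \rr$ contains the open interval $(x_0 - R, x_0 + R)$, which obviously has accumulation points (every one of its points is one). So the hypothesis is automatic and there is essentially nothing else to check.

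Concretely, I would proceed in two short steps. First, observe that $B = B(x_0, R)$ is an open subset of $\rr^{n+1}$ with $B \cap \rr = (x_0 - R, x_0 + R) \neq \emptyset$, so that $B \cap \rr$ has accumulation points. Second, the functions $f$ and $g$ are s-monogenic on $B$ and by assumption coincide on a subset of $L_I \cap B$ having an accumulation point for some $I \in \mathbb{S}$. Therefore the hypotheses of the previous corollary are satisfied with $U = B$, and we conclude $f \equiv g$ on $B$.

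There is no genuine obstacle here; the corollary is really just a convenient specialization of the previous corollary to the case of a ball centered on the real axis, making explicit that the condition ``$U \cap \rr$ has an accumulation point'' is automatic for such balls and need not be imposed separately. If one wanted to be fully self-contained rather than citing the previous corollary, one could instead apply the identity principle directly to $h := f - g$, which is s-monogenic on $B$ by Remark~2.2 (the module structure), and whose zero set contains the given accumulating subset of $L_I \cap B$; the conclusion $h \equiv 0$ on $B$ is then the same statement. Either route gives the result with essentially no new work.
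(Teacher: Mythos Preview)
Your proposal is correct and matches the paper's approach: the paper states this corollary without proof, as an immediate consequence of the preceding corollary, and your observation that $B\cap\rr=(x_0-R,x_0+R)$ automatically has accumulation points is exactly the missing check. There is nothing to add.
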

\begin{corol}
Let  $B=B(x_0,R)$, $x_0\in\rr$, and let $f:\ B\ \to \rr_n$ be an s-monogenic function. Then $f\equiv 0$ on $B$ if and only if
$\pp^n_sf(0)=0$ for all $n\in\mathbb{N}$.
\end{corol}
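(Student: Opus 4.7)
The plan is to treat the two implications separately, with the forward direction being immediate and the reverse direction being a quick consequence of the Taylor expansion around a real centre.

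For the ``only if'' direction, I would simply note that if $f \equiv 0$ on $B$ then every $s$-derivative of $f$ vanishes identically on $B$, and in particular at the centre. (Here I am reading the hypothesis $\pp_s^n f(0)=0$ as $\pp_s^n f(x_0)=0$, which appears to be the intended formulation since the ball is centred at $x_0\in\rr$; the proof is the same in either reading, with a minor extra step in the literal reading, as explained below.)

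For the ``if'' direction, the key tool is Proposition \ref{centerreal}, which asserts that on a ball centred at a real point $x_0$ an $s$-monogenic function admits the expansion
$$
f(\vx)=\sum_{m\geq 0}(\vx-x_0)^m\frac{1}{m!}\frac{\pp^m f}{\pp x^m}(x_0).
$$
The second assertion of Proposition \ref{propreg}.1 identifies $\pp_s^m f$ with $\pp^m f/\pp x^m$ on slice coordinates, and in particular at the real point $x_0$ one has $\pp_s^m f(x_0) = \pp^m f/\pp x^m (x_0)$. Hence the hypothesis that all $s$-derivatives vanish at the centre forces every Taylor coefficient in the expansion above to vanish, and therefore $f\equiv 0$ on $B$.

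If one insists on the literal statement that $\pp_s^n f(0)=0$ for all $n$ (with $0\in B$ but possibly $x_0\neq 0$), the only modification needed is to apply Proposition \ref{coeffTaylor} on the ball $B(0,R-|x_0|)\subseteq B$, concluding that $f$ vanishes on that smaller ball centred at the origin. This subset of $B$ meets the real axis in an interval, which has an accumulation point in $U\cap\rr$, so the Identity Principle then extends the vanishing to all of $B$. No step is really an obstacle here; the only thing to be careful about is invoking Proposition \ref{propreg}.1 to pass between the $s$-derivative and the ordinary real derivative, which is legitimate precisely because the centre lies on the real axis.
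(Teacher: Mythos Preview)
Your proposal is correct. The paper states this corollary without proof, placing it immediately after the Identity Principle and its companion corollaries, so it is evidently meant as a routine consequence of the preceding material. Your argument via the Taylor expansion of Proposition~\ref{centerreal} together with the identification $\pp_s^m f(x_0)=\pp^m f/\pp x^m(x_0)$ from Proposition~\ref{propreg}.1 is exactly the natural line, and your handling of the $0$-versus-$x_0$ ambiguity (invoking Proposition~\ref{coeffTaylor} on the smaller ball $B(0,R-|x_0|)$ and then the Identity Principle to propagate the vanishing to all of $B$) is careful and correct.
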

\section{Laurent series}
\begin{proposition}
Let $f:\ B(0,R)\to\rr_n$ be the s-monogenic function expressed by the series
$\sum \vx^m a_m$ converging on $B$.
Then the function $f\cdot\mathscr{I}$ is s-monogenic on $\rr^{n+1}\setminus B(0,1/R)$
and it can be expressed by
the series $\sum \vx^{-m}a_m$ converging on $\rr^{n+1}\setminus B(0,1/R)$.
\end{proposition}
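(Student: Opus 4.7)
The plan is to interpret $f\cdot\mathscr{I}$ as the composition $(f\circ\mathscr{I})(\vx):=f(\vx^{-1})$, since this is the only reading consistent with the claimed expansion $\sum \vx^{-m}a_m$ (e.g.\ the case $f\equiv 1$ already shows the product interpretation fails). Proposition~\ref{composition} cannot be invoked directly because $\mathscr{I}$ is not expressed by a power series centered at the origin, so I will proceed by a slice-by-slice argument in the spirit of the Splitting Lemma.

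The key geometric observation is that the inversion $\mathscr{I}$ preserves each plane $L_I$: for $\vx=x+Iy\in L_I\setminus\{0\}$ one has
$$
\vx^{-1}=\frac{\bar\vx}{|\vx|^2}=\frac{x-Iy}{x^2+y^2}\in L_I.
$$
Fixing $I\in\mathbb{S}$, the restriction $(f\circ\mathscr{I})_I$ of $f\circ\mathscr{I}$ to $L_I$ therefore coincides with $f_I\circ\mathscr{I}_I$, where $f_I:\Delta_I(0,R)\to\rr_n$ is holomorphic (being the restriction of an s-monogenic function, as used throughout the Splitting Lemma applications) and $\mathscr{I}_I(z)=1/z$ is classically holomorphic on $L_I\setminus\{0\}$. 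The composition is holomorphic on $\{z\in L_I:|z|>1/R\}$, so $\overline{\partial}_I(f\circ\mathscr{I})_I=0$ there. Letting $I$ vary over $\mathbb{S}$ yields s-monogenicity on $\rr^{n+1}\setminus\overline{B(0,1/R)}$, which is the intended exterior domain.

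For the series expansion I would exploit the fact that $\vx$ and $\vx^{-1}$ commute (both lie in the paravector subalgebra generated by $1$ and $\unx$), from which it follows that $(\vx^{-1})^m=\vx^{-m}$ and $|\vx^m|=|\vx|^m$. Substituting $\vx^{-1}$ into the convergent series for $f$ then formally gives $f(\vx^{-1})=\sum_m\vx^{-m}a_m$. The Cauchy--Hadamard estimate applied to the original series yields $\limsup|a_m|^{1/m}\leq 1/R$, so $\limsup(|\vx|^{-m}|a_m|)^{1/m}=|\vx|^{-1}\limsup|a_m|^{1/m}<1$ precisely when $|\vx|>1/R$. Hence the substituted series converges absolutely on the exterior domain and represents $f\circ\mathscr{I}$ there.

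The main subtlety is legitimising the slice-by-slice composition despite $\mathscr{I}$ lacking a power-series representation at the origin; this is resolved entirely by the plane-preservation property above, after which every step reduces to ordinary one-variable complex analysis on a line $L_I$, together with the elementary paravector identities for $\vx^{-1}$.
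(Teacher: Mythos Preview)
Your reading of $f\cdot\mathscr{I}$ as the composition $f\circ\mathscr{I}$ is the right one, and your argument is correct. The paper's own proof consists of two sentences: it invokes Proposition~\ref{composition} to conclude s-monogenicity, and then appeals to ``the analogue result for holomorphic functions in one complex variable'' for the series expansion and convergence.

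You correctly flag that Proposition~\ref{composition} as stated applies only to an inner function $g$ given by a power series at the origin, which $\mathscr{I}$ is not; the paper glosses over this. Your slice-by-slice argument---observe that $\mathscr{I}$ preserves each $L_I$, so $(f\circ\mathscr{I})_I=f_I\circ\mathscr{I}_I$ is a composition of holomorphic maps on $L_I$---is exactly the rigorous content that makes the paper's appeal to Proposition~\ref{composition} legitimate, and is in any case what ``the analogue result for holomorphic functions'' amounts to. So the two proofs share the same underlying idea (reduce to classical complex analysis on each slice), but yours supplies the plane-preservation step explicitly and handles convergence via Cauchy--Hadamard rather than by citation. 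The gain is rigor; the cost is only a few extra lines.
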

\begin{proof}
Proposition \ref{composition} implies that $f\cdot\mathscr{I}$ is an s-monogenic
function on $\rr^{n+1}\setminus B(0,1/R)$. The statement follows from the analogue result
for holomorphic functions in one complex variable.
\end{proof}
\begin{theorem}
Let $f$ be an s-monogenic function in an annular domain $A=\{\vx \in\rr^{n+1}\ |\ R_1<|\vx |<R_2 \}$,
$0<R_1<R_2$. Then $f$ admits the following unique Laurent expansion
\begin{equation}\label{Laur}
f(\vx )=\sum_{m=0}^{+\infty} \vx^m a_m +\sum_{m=1}^{+\infty} \vx^{-m} b_{m}
\end{equation}
where
$a_m=\frac{1}{m!}\pp_s^mf(0)$ and $b_m = \frac{1}{2\pi } \int_{\pp B(0,R'_1)\cap L_{I_{\vx}}}
{\vec \zeta}^{m-1} d{\vec\zeta}_{I_{\vx}}{f({\vec\zeta})}$.
The two series in (\ref{Laur}) converge in the open ball $B(0,R_2)$ and
$\rr^{n+1}\setminus \overline{B(0,R_1)}$, respectively.
\end{theorem}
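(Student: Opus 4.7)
The plan is to derive the two-sided expansion directly from the annular Cauchy formula recorded in the Remark after Theorem~\ref{Cauchyyyy}. Fix $\vx\in A$ and pick radii $R_1<R'_1<|\vx|<R'_2<R_2$; let $I_{\vx}$ be the associated imaginary unit. The annular Cauchy formula writes
\[
f(\vx)=\frac{1}{2\pi}\int_{\pp B(0,R'_2)\cap L_{I_{\vx}}}(\vec\zeta-\vx)^{-1}\,d\vec\zeta_{I_{\vx}}f(\vec\zeta)
-\frac{1}{2\pi}\int_{\pp B(0,R'_1)\cap L_{I_{\vx}}}(\vec\zeta-\vx)^{-1}\,d\vec\zeta_{I_{\vx}}f(\vec\zeta).
\]
Since $\vx$ and the integration variable $\vec\zeta$ both lie in the complex slice $L_{I_{\vx}}$, they commute, and the standard one-variable kernel expansions become available.

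On the outer circle $|\vec\zeta|=R'_2>|\vx|$, I write $(\vec\zeta-\vx)^{-1}=\sum_{m\ge 0}\vx^m\vec\zeta^{-m-1}$, uniformly convergent in $\vec\zeta$, exchange sum and integral, and read off the regular part $\sum_{m\ge 0}\vx^m a_m$. On the inner circle $|\vec\zeta|=R'_1<|\vx|$, the opposite expansion $(\vec\zeta-\vx)^{-1}=-\sum_{m\ge 0}\vx^{-1-m}\vec\zeta^m$ is uniformly convergent; after absorbing the overall minus sign and reindexing by $k=m+1$, the second integral becomes $\sum_{k\ge 1}\vx^{-k}b_k$ with $b_k$ exactly as stated. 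The geometric-series estimates also show that the regular part converges on all of $B(0,R_2)$ and the principal part on all of $\rr^{n+1}\setminus\overline{B(0,R_1)}$.

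Next I must show the coefficients are intrinsic, i.e.\ independent of the contour radii and of $I_{\vx}$. The Splitting Lemma on the slice $L_{I_{\vx}}$ writes $f_I=\sum_A F_A I_A$, so $\vec\zeta^{\,\ell}f(\vec\zeta)$ has holomorphic scalar components and Theorem~\ref{int_nullo} permits homotoping both contours within the annulus: the coefficients do not see $R'_1,R'_2$. Independence of $I_{\vx}$ is obtained \emph{a posteriori} from uniqueness: once the identity $f=f^++f^-$ holds throughout $A$, with $f^+(\vx):=\sum_{m\ge 0}\vx^m a_m$ and $f^-(\vx):=\sum_{m\ge 1}\vx^{-m}b_m$, then $f^+$ extends s-monogenically to $B(0,R_2)$ and its coefficients are the intrinsic quantities $\tfrac{1}{m!}\pp_s^m f^+(0)$ by Proposition~\ref{coeffTaylor} (this is the interpretation of $\tfrac{1}{m!}\pp_s^m f(0)$ in the statement). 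The Proposition immediately preceding the theorem furnishes the corresponding intrinsic character of $f^-$, hence of the $b_m$. Uniqueness is then established by restricting a second putative expansion to any slice $L_I$, applying the Splitting Lemma, and invoking classical uniqueness of scalar Laurent series for each $F_A$ component.

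The main obstacle is not a single hard estimate but the careful threading of \emph{uniqueness} through the argument: the integral formulas, as written, produce coefficients that depend prima facie on the slice chosen for the annular Cauchy formula, and only after splitting $f=f^++f^-$ into genuine s-monogenic regular and principal parts and invoking Proposition~\ref{coeffTaylor} together with the preceding Proposition does the slice-independence become manifest. Getting the logical order of these steps right — first existence and the integral formulas on one slice, then deformation within the slice, then identification with intrinsic s-derivatives — is the principal delicacy of the proof.
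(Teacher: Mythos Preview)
Your argument is correct and follows essentially the same route as the paper: both start from the annular Cauchy formula (the Remark after Theorem~\ref{Cauchyyyy}), expand the kernel on the two circles using commutativity of $\vx$ and $\vec\zeta$ on the slice $L_{I_{\vx}}$, and then address why the resulting coefficients are slice-independent. The only cosmetic difference is that the paper invokes the Splitting Lemma explicitly to treat the principal part component by component and then argues that the series $\tilde f_2$ agrees with $f_2$ on $L_{I_{\vx}}$ ``thus everywhere,'' whereas you handle slice-independence \emph{a~posteriori} via uniqueness of the decomposition $f=f^++f^-$ and Proposition~\ref{coeffTaylor}; both arguments rest on the same identity-principle mechanism.
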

\begin{proof}
Let $\vx\in A$, then there exist two positive real numbers $R_1'$, $R'_2$ such that
$A'=\{\vx \in\rr^{n+1}\ |\ R'_1<|\vx |<R'_2 \}\subset A$, and $\vx \in A'$. Using the Cauchy
integral formula, we can write
$$
f(\vx)=\frac{1}{2\pi } \int_{\pp A'\cap L_{I_{\vx}}} ({\vec\zeta} -\vx)^{-1}\,
d{\vec\zeta}_{I_{\vx}}{f({\vec\zeta})}=f_1(\vx )+f_2(\vx)
$$
where
$$
f_1(\vx)=\frac{1}{2\pi } \int_{\pp B(0,R'_2)\cap L_{I_{\vx}}} ({\vec\zeta} -\vx)^{-1}\,
d{\vec\zeta}_{I_{\vx}}{f({\vec\zeta})}
$$
and
$$
f_2(\vx)=-\frac{1}{2\pi } \int_{\pp B(0,R'_1)\cap L_{I_{\vx}}} ({\vec\zeta} -\vx)^{-1}\,
d{\vec\zeta}_{I_{\vx}}{f({\vec\zeta})}.
$$
The first integral is associated to the first series in the Laurent expansion, thanks
to Proposition \ref{coeffTaylor}.
Let us consider the second integral. Using the Splitting Lemma, we can reason
as in the case of functions in one complex variable, and consider the single
 components of $f_2(\vx )$.
In $\rr^{n+1}\setminus \overline{B(0,R_1')}$, we have
$$
F_A(\vx)=-\frac{1}{2\pi } \int_{\pp B(0,R'_1)\cap L_{I_{\vx}}} ({\vec\zeta} -\vx)^{-1}\,
d{\vec\zeta}_{I_{\vx}}{F_A({\vec\zeta})}=\frac{1}{2\pi } \int_{\pp B(0,R'_1)\cap L_{I_{\vx}}} \sum_{m\geq 0}
\vx^{-m-1}{\vec\zeta}^m
d{\vec\zeta}_{I_{\vx}}{F_A({\vec\zeta})}
$$
where we have used the fact that on the plane $L_{I_{\vx}}$ the variables ${\vec\zeta}$ and $\vx$ commute.
Now, using the uniform convergence of the series we can write
$$
F_A(\vx)= \sum_{m\geq 0}\vx^{-m-1} \frac{1}{2\pi } \int_{\pp B(0,R'_1)\cap L_{I_{\vx}}}
{\vec\zeta}^m d{\vec\zeta}_{I_{\vx}}{F_A({\vec\zeta})}=  \sum_{m\geq 0}\vx^{-m-1}  b_{m+1,I_{\vx};A}
$$
where
$$
b_{m+1,I_{\vx};A}:= \frac{1}{2\pi } \int_{\pp B(0,R'_1)\cap L_{I_{\vx}}}
{\vec\zeta}^m d{\vec\zeta}_{I_{\vx}}{F_A({\vec\zeta})}.
$$
We can write:
$$
\tilde f_2(\vx)=\sum_A F_A(\vx)I_A=\sum_{m\geq 0}\sum_A  \vx^{-m-1}  b_{m+1,I_{\vx};A} I_A
$$
however, $\tilde f_2(\vx) $ coincides with $f_2(\vx)$ on the plane $L_{I_{\vx}}$, thus they coincide
everywhere and
 the coefficients $b_{m+1, I_{\vx};A}$ do not depend on the choice of the imaginary unit $I_{\vx}$. The statement follows.
\end{proof}
\begin{remark}
{\rm An analogue result holds for functions s-monogenic in an annular domain of the type $A=\{\vx \in\rr^{n+1}\ |\ R_1<|\vx -y_0 |<R_2, \ y_0\in\rr \}$. In fact, it is sufficient to reason as in the proof of Proposition \ref{centerreal}.}
\end{remark}

\section{Some polynomial equations and series}
As far as we know, in the literature there are no results about the zeroes of monogenic functions
(here we mean in the sense of \cite{bds}) or about the zeroes of polynomials in the variables $\vx$ or $\unx$
(which, of course, are not interesting from the Clifford analysis point of view since they do not correspond
to monogenic functions). As Clifford algebras are not division algebras, the Fundamental Theorem
of Algebra does not hold, thus we cannot guarantee that a given polynomial in the variable $\vx$
has a zero, not even if it is a degree one polynomial. However, if a given polynomial has a zero, it
is interesting to characterize its vanishing set. As we will see in the sequel, the set of zeroes can
contain $(n-1)$--spheres in $\rr^{n+1}$.
To describe those spheres it is useful to introduce the following
notation: let $\vec s =s_0+\sum_{i=1}^ns_i e_i$ and let
$$
[\vec s\,]=\{\vx\in\rr^{n+1}\ | \ x_0=s_0,\ |\vx|=|\vec s\,| \}.
$$
It is immediate to note that the relation $\vx\sim\vec s$ if and only if $x_0=s_0$, $|\vx|=|\vec s\,\,|$
is an equivalence relation. An equivalence class
contains only the element $\vec s$ when it is a real number, while it contains infinitely many
elements when $\vec s$ is not real and corresponds to an $(n-1)$--dimensional sphere in $\rr^{n+1}$.
It is well known (see e.g. \cite{lam}) that
in the skew field of quaternions, the equivalence class of a quaternion $s$ is characterized by a
quadratic equation.
A first, yet interesting, result is that the same fact holds also in a (non division)
Clifford algebra
$\rr_n$ if $\vec s\in\rr^{n+1}$ and one looks for solutions in $\rr_n^0\oplus \rr_n^1$:
\begin{proposition} Let $\vec s=s_0+\sum_{i=1}^ns_i e_i\in\rr^{n+1}$.
Consider the equation
\begin{equation}\label{eqaurea}
\vx^2-2 Re[\vec s]\vx +|\vec s\,|^2=0.
\end{equation}
Then,  $\vx=x_0+\unx$, $x_0\in\rr_n^0=\rr$, $\unx\in\rr_n^1$ is a solution if and only if
$\vx\in [\vec s\,]$.
\end{proposition}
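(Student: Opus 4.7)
The plan is to substitute $\vec{x} = x_0 + \underline{x}$ directly into the left-hand side of (\ref{eqaurea}) and split the result into its scalar ($\mathbb{R}_n^0$) and $1$-vector ($\mathbb{R}_n^1$) components, observing that each component must vanish separately.

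First I would compute $\vec{x}^2$. Since $\underline{x}=x_1e_1+\dots+x_ne_n$, the Clifford relations $e_ie_j+e_je_i=-2\delta_{ij}$ yield $\underline{x}^2=-|\underline{x}|^2$, so
\[
\vec{x}^2 = x_0^2 + 2x_0\underline{x} + \underline{x}^2 = (x_0^2 - |\underline{x}|^2) + 2x_0\underline{x}.
\]
Using $\operatorname{Re}[\vec{s}] = s_0$ and $|\vec{s}\,|^2 = s_0^2 + |\underline{s}|^2$, the left-hand side of (\ref{eqaurea}) becomes
\[
\bigl[(x_0-s_0)^2 + |\underline{s}|^2 - |\underline{x}|^2\bigr] + 2(x_0 - s_0)\underline{x},
\]
where the first bracket lies in $\mathbb{R}_n^0$ and the second summand lies in $\mathbb{R}_n^1$.

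Since $\mathbb{R}_n^0 \cap \mathbb{R}_n^1 = \{0\}$, the equation (\ref{eqaurea}) is equivalent to the system
\[
(x_0-s_0)^2 + |\underline{s}|^2 - |\underline{x}|^2 = 0, \qquad (x_0 - s_0)\underline{x} = 0.
\]
I would then analyze the second equation, which forces either $x_0 = s_0$ or $\underline{x} = 0$. In the first case, the scalar equation reduces to $|\underline{x}| = |\underline{s}|$, whence $|\vec{x}\,|^2 = s_0^2 + |\underline{s}|^2 = |\vec{s}\,|^2$, so $\vec{x} \in [\vec{s}\,]$. In the second case $\vec{x} = x_0 \in \mathbb{R}$, and the scalar equation forces $|\underline{s}|=0$ and $x_0=s_0$, so $\vec{s} = s_0$ is real and $\vec{x} = \vec{s} \in [\vec{s}\,]$. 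Conversely, if $\vec{x} \in [\vec{s}\,]$ then $x_0 = s_0$ and $|\underline{x}| = |\underline{s}|$, which makes both components vanish.

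There is no substantial obstacle here; the argument is a direct Clifford-algebraic computation. The only subtlety worth flagging is the degenerate case $\underline{x}=0$ with $\vec{s}$ non-real, which would correspond to a real solution of (\ref{eqaurea}) and is correctly ruled out by the scalar equation $(x_0-s_0)^2 + |\underline{s}|^2 = 0$. It is also important to restrict attention to $\vec{x} \in \mathbb{R}_n^0 \oplus \mathbb{R}_n^1$ as in the statement, so that the grade decomposition into scalar and $1$-vector parts is unambiguous.
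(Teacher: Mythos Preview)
Your proof is correct and follows essentially the same approach as the paper: both substitute $\vec x=x_0+\underline{x}$, perform the direct Clifford computation, and split into the two cases $\underline{x}=0$ or $x_0=s_0$. The paper's version is terser (it separates the real case $\vec s\in\rr$ at the outset and then just says ``a direct computation shows\ldots''), whereas you carry out the grade decomposition explicitly, but the argument is the same.
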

\begin{proof} The result is immediate when $\vec s=s_0\in\rr$.
Let us suppose that $\vec s\not\in\rr$. It is immediate that $\vx\in [\vec s]$ is a solution.
Conversely, let  $\vx$
be a solution, i.e. $(x_0+\unx)^2-2Re[s](x_0+\unx)+|\vec s\,|^2=0$.
A direct computation shows that this is possible if and only if $\unx=0$ or $x_0=s_0$. The first
possibility does not give any solution, while the second gives $|\vx|=|\vec s\,|$, i.e. the equivalence
class of $\vec s$.
\end{proof}
\begin{remark}{\rm The $(n-1)$-sphere corresponding to $[\vec s\,]$ contains elements of the type
$s_0+I|\vec s|$, with $I$ varying in all the possible ways in $\mathbb{S}$.
}
\end{remark}
\begin{remark}{\rm It is not true, in general, that
equation (\ref{eqaurea}) characterizes an equivalence
class if we consider generic vectors in $\rr_n$.
Consider for example $s=(1-e_{123})$. Then $s$ itself is not a solution of the
equation.}
\end{remark}

Following \cite{advances}, Theorem 5.1, we can prove the following
theorem which establish that if the vanishing set of an s-monogenic function contains
two different points on the same $(n-1)$--sphere, then it contains the whole sphere:
\begin{theorem}
Let $\sum_{m\geq 0}\vx^{m}a_{m}$ be the power series with radius of convergence $R$
associated to a given s-monogenic function. If there are two different elements in
a given equivalence class $[\vec s]$, both solutions to the equation
$$
\sum_{m\geq 0}^{+\infty}\vx^{m}a_{m}=0,
$$
then all the elements in the equivalence class are solutions.
\end{theorem}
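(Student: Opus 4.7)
The plan is to parametrize the equivalence class and reduce the vanishing condition to a pair of Clifford-algebra equations in two unknowns, then use the invertibility of nonzero $1$-vectors in $\rr_n$ to conclude. First I would observe that if $\vec s\in\rr$, the class $[\vec s\,]$ is a single point and the hypothesis is vacuous; so assume $\vec s\notin\rr$ and set $r=\sqrt{|\vec s\,|^{2}-s_{0}^{2}}>0$, so that every element of $[\vec s\,]$ has the form $\vx=s_{0}+rI$ for a unique $I\in\mathbb{S}$. Since both hypothesised solutions lie in $[\vec s\,]$ and hence in the disk of convergence, we automatically have $|\vec s\,|<R$, and the series converges at every point of the class (because $|\vx^{m}|=|\vec s\,|^{m}$ for all such $\vx$).

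Since $I^{2}=-\langle I,I\rangle=-1$ for every $I\in\mathbb{S}$, the binomial expansion gives
\[
(s_{0}+rI)^{m}=\alpha_{m}+I\beta_{m},
\]
where $\alpha_{m},\beta_{m}\in\rr$ are the real and imaginary parts of the ordinary complex number $(s_{0}+ir)^{m}$; crucially they depend on $s_{0},r,m$ but not on $I$. Because $\alpha_{m},\beta_{m}$ are real, they pass through the right coefficients $a_{m}\in\rr_{n}$, and termwise summation yields
\[
\sum_{m\geq 0}(s_{0}+rI)^{m}a_{m}=A+IB,\qquad A:=\sum_{m\geq 0}\alpha_{m}a_{m},\quad B:=\sum_{m\geq 0}\beta_{m}a_{m},
\]
with $A,B\in\rr_{n}$ independent of $I$.

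The hypothesis now furnishes two distinct $I_{1},I_{2}\in\mathbb{S}$ satisfying $A+I_{1}B=0$ and $A+I_{2}B=0$, whence $(I_{1}-I_{2})B=0$. Here is the decisive step: $I_{1}-I_{2}$ is a nonzero $1$-vector, and any nonzero $v\in\rr_{n}^{1}$ satisfies $v^{2}=-|v|^{2}\in\rr\setminus\{0\}$, so $v$ is invertible in $\rr_{n}$ with inverse $-v/|v|^{2}$. Hence $B=0$, which forces $A=0$, and consequently $A+IB=0$ for \emph{every} $I\in\mathbb{S}$; this shows that every $s_{0}+rI\in[\vec s\,]$ is a solution. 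The main obstacle is the non-commutativity of $\rr_{n}$ together with the presence of zero-divisors, which rules out a naive left-cancellation argument; the key idea that sidesteps it is the invertibility of nonzero $1$-vectors in the Clifford algebra, combined with the fact that only real scalars appear in the expansion of $(s_{0}+rI)^{m}$.
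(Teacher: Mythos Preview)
Your proof is correct and follows essentially the same approach as the paper: expand $(s_0+rI)^m=\alpha_m+I\beta_m$ with $I$-independent real $\alpha_m,\beta_m$, subtract the two vanishing conditions to obtain $(I_1-I_2)B=0$, and use invertibility of the nonzero $1$-vector $I_1-I_2$ to deduce $B=0$, hence $A=0$. If anything, your version is slightly more careful, treating the vacuous real case, checking convergence on the whole class, and spelling out why nonzero $1$-vectors are invertible.
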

\begin{proof}
Suppose that there exist $x_{0}+I_1
    y_{0}$, $x_{0}+I_2
    y_{0}\in [\vec s]$ and $I_2\not= I_1 \in \mathbb{S}$,  such
    that
    \begin{equation}\label{eqn}
    \sum_{m\geq 0}^{+\infty}(x_{0}+y_{0}I_1)^{m}a_{m}=0
   \end{equation}
    and
    \begin{equation}\label{eqnn}
    \sum_{m\geq 0}^{+\infty}(x_{0}+y_{0}I_2)^{m}a_{m}=0.
    \end{equation}
For any fixed $m\in \mathbb{N}$ and any $I\in \mathbb{S}$ we have that
    \begin{equation}\label{eqnnn}
    (x_{0}+y_{0}I)^{m} = \sum_{i=0}^{m}{m \choose
    i}x_{0}^{m-i}y_{0}^{i}I^{i}= \alpha_{m}+I\beta_{m}.
    \end{equation}
Equations (\ref{eqn}), (\ref{eqnn}) and equality (\ref{eqnnn}) give, by absolute
convergence,
    $$
    0=\sum_{m\geq 0}(\alpha_m +I_1 \beta_m)a_{m}
    -\sum_{m\geq 0}(\alpha_m + I_2 \beta_m)a_{m}=
    $$
    $$
    \sum_{m\geq 0}
    \left((\alpha_{m}+I_1\beta_{m})-(\alpha_{m}+I_2\beta_{m})\right)a_{m}=
    $$
    $$
    \sum_{m\geq 0}
    \left(I_1\beta_{m}-I_2\beta_{m}\right)a_{m}=
   (I_1-I_2)\left(\sum_{m\geq 0}
    \beta_{m}a_{m}\right).
    $$
Note that $I_1-I_2$ is an invertible element, therefore
    $
    \sum_{m\geq 0}
    \beta_{m}a_{m}=0
    $
    and similarly, by (\ref{eqn}) also $
    \sum_{m\geq 0}
    \alpha_{m}a_{m}=0.
    $
   Using (\ref{eqnnn}) we get that for any $I\in \mathbb{S}$
     $$
    \sum_{m\geq 0}(x_{0}+y_{0}I)^{m}a_{m}=
    \sum_{m\geq 0}\alpha_{m}a_{m}+I\left(\sum_{m\geq 0}^{+\infty}\beta_{m}a_{m}
    \right)=0.
    $$
\end{proof}
\begin{remark}{\rm Whenever a polynomial $p(\vx)$ with complex coefficients
(i.e. coefficients on a plane $L_I$) has two conjugate complex roots, all the points on the sphere
defined by any of two points is a root of the polynomial. }
\end{remark}
\begin{remark}{\rm In a Clifford algebra, the Euclidean algorithm is not allowed, in general. Thus,
if a polynomial in the variable $\vx$ factors as $p(\vx )=(\vx
-\vec a) q(\vx)$ then $\vx=\vec a$ is a solution of $p(\vx )=0$
but knowing a solution $\vx=\vec a$ is
of the equation $p(\vx )=0$ does not allow to divide the
polynomial by$(x-\vec a)$. }
\end{remark}

\subsection{The non commutative Cauchy kernel series }

We conclude this section  with the study of the noncommutative Cauchy kernel series
\begin{equation}\label{NCCKS}
\sum_{n\geq 0}\vp^n \vs^{-1-n},
\end{equation}
for $\vp\vs\not=\vs\vp$,
which is a fundamental tool to define a functional calculus for several noncommuting operators, see \cite{newfunc}.
To motivate the results of this section we make the following observations.
We consider the Cauchy integral formula given by Theorem \ref{Cauchyyyy} and we expand in power series the Cauchy kernel $(\vs -\vp)^{-1}$.
Now, let  $T=T_0+T_1e_1+...+T_ne_n$ be a bounded linear operator where $T_j$, $j=0,...,n$ are
linear bounded operators in the usual sense acting on a Banach space.
\par\noindent
To obtain a functional calculus we have to  formally replace  $\vp$ by $T$ in the non commutative Cauchy kernel series (\ref{NCCKS}) and we have to get the correct resolvent operator for $T$. Thus it is necessary to sum the series $\sum_{n\geq 0}\vp^n \vs^{-1-n}$ in the case $\vp\vs\not=\vs\vp$
and, moreover, we have to observe that this sum does not depend on the commutativity of the real components of $\vp$, see Remark \ref{remimport}.
 Indeed, the components of $T$ in general do not commute.
\par\noindent
The sum
of $\sum_{n\geq 0}\vp^n \vs^{-1-n}$ allows us to define a new notion of resolvent operator  for the $(n+1)$-tuple $T$ of non commuting  operators $T_j$ and, as a consequence, a new eigenvalue equation.
\par\noindent
The most important results of this section is Corollary \ref{corCauchy}, to prove it
we begin with  the following:

\begin{theorem} \label{th5.9}Let $\vec p$,
$\vec s\,\in\rr_n^0\oplus\rr_n^1$ be such that $\vp\vs\not=\vs\vp$.
Then
$
-(\vp-\overline{\vs})^{-1}(\vp^2-2\vp Re [\vs]+|\vs|^2),
$
is the inverse of the noncommutative Cauchy kernel series (\ref{NCCKS})
which is convergent for $|\vp|<|\vs|$.
\end{theorem}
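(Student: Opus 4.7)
My plan is to prove the algebraic identity $Q\cdot S = -(\vp-\overline{\vs})$, where $S:=\sum_{n\geq 0}\vp^n\vs^{-1-n}$ denotes the noncommutative Cauchy kernel series and $Q:=\vp^2-2\operatorname{Re}[\vs]\vp+|\vs|^2$, and then to read off the claimed inverse by left-multiplication by $-(\vp-\overline{\vs})^{-1}$. Convergence of $S$ in the regime $|\vp|<|\vs|$ is routine: since $\vp$ and $\vs$ are paravectors, each satisfies a real quadratic relation of the form $\vx^2=2\operatorname{Re}[\vx]\vx-|\vx|^2$, and a short induction shows $|\vp^n|=|\vp|^n$ and $|\vs^{-1-n}|=|\vs|^{-1-n}$; together with the submultiplicativity of the Clifford norm this gives absolute convergence.

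The core of the proof is the expansion of $Q\cdot S$. The crucial observation is that $2\operatorname{Re}[\vs]$ and $|\vs|^2$ are \emph{real} scalars, hence central in $\rr_n$, and slide freely past powers of $\vp$. Writing $|\vs|^2=\vs\overline{\vs}$ and exploiting that $\vs$ and $\overline{\vs}$ commute, one rewrites $|\vs|^2\vs^{-1-n}=\vs^{-n}\overline{\vs}$. Reindexing the three resulting series according to the power $m$ of $\vp$ on the left, the coefficient for each $m\geq 2$ factors as $\vp^m\vs^{-m}(\vs+\overline{\vs}-2\operatorname{Re}[\vs])$ and vanishes because $\vs+\overline{\vs}=2\operatorname{Re}[\vs]$. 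Only the boundary terms $m=0$ and $m=1$ survive, contributing $\overline{\vs}$ and $-\vp$ respectively (for $m=1$, $-2\operatorname{Re}[\vs]\vp\vs^{-1}+\vp\vs^{-1}\overline{\vs}=\vp\vs^{-1}(\overline{\vs}-2\operatorname{Re}[\vs])=-\vp\vs^{-1}\vs=-\vp$), so $Q\cdot S=\overline{\vs}-\vp$.

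To conclude, I would invert $\vp-\overline{\vs}$ on the left: the hypothesis $\vp\vs\neq\vs\vp$ forces $\vp\neq\overline{\vs}$ (otherwise $\vp\vs=\overline{\vs}\vs=\vs\overline{\vs}=\vs\vp$), so $\vp-\overline{\vs}$ is a nonzero paravector and therefore invertible in $\rr_n$. Left-multiplying $Q\cdot S=-(\vp-\overline{\vs})$ by $-(\vp-\overline{\vs})^{-1}$ yields $\bigl[-(\vp-\overline{\vs})^{-1}Q\bigr]\cdot S=1$, and since $\rr_n$ is a finite-dimensional associative algebra this left inverse is automatically two-sided.

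The principal obstacle is the noncommutativity: the classical telescoping argument that gives $\sum\vp^n\vs^{-1-n}=(\vs-\vp)^{-1}$ in the commuting case is not available. The quadratic factor $Q$ plays the role of the minimal polynomial of $\vs$ evaluated at $\vp$, and the fact that its coefficients are real scalars is exactly what allows the noncommutative cross terms to cancel telescopically, reducing the infinite sum to the two surviving boundary contributions.
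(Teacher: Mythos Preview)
Your proof is correct and follows essentially the same approach as the paper: both compute $Q\cdot S$ by using that the real scalars $2\operatorname{Re}[\vs]$ and $|\vs|^2$ commute with powers of $\vp$, then telescope via the paravector identity (you use $\vs+\overline{\vs}=2\operatorname{Re}[\vs]$ after factoring out $\vp^m\vs^{-m}$, the paper uses the equivalent form $\vs^2-2\vs\operatorname{Re}[\vs]+|\vs|^2=0$), leaving only the $m=0,1$ boundary terms. Your additional care in justifying convergence, the invertibility of $\vp-\overline{\vs}$, and the two-sidedness of the inverse is welcome but not a departure from the paper's method.
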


\begin{proof}
Let us verify that
$$
-(\vp-\overline{\vs})^{-1}(\vp^2-2\vp Re [\vs]+|\vs|^2)\ \sum_{n\geq 0} \vp^n \vs^{-1-n}=1.
$$
We therefore obtain
\begin{equation}\label{SS1}
(-|\vs|^2-\vp^2+2\vp Re [\vs])\sum_{n\geq 0} \vp^n \vs^{-1-n}=\vs+\vp-2 \ Re [\vs].
\end{equation}
Observing that $-|\vs|^2-\vp^2+2\vp Re [\vs]$ commutes with $\vp^n$ we can
rewrite this last equation as
$$
\sum_{n\geq 0} \vp^n(-|\vs|^2-\vp^2+2\vp Re [\vs]) \vs^{-1-n}=\vs+\vp-2 \ Re [\vs].
$$
Now the left hand side can be written as
$$
\sum_{n\geq 0} \vp^n(-|\vs|^2-\vp^2+2\vp Re [\vs]) s^{-1-n}=
$$
$$
=-\Big(|\vs|^2\vs^{-1}+\vp(-2\vs Re [\vs] +|\vs|^2)\vs^{-2} +\vp^2(\vs^2-2\vs Re
[\vs]+|\vs|^2)\vs^{-3} +\vp^3(\vs^2-2\vs Re [\vs]+|\vs|^2)\vs^{-4}+...\Big).
$$
Using the identity (\ref{eqaurea}):
$
\vs^2-2\vs Re [\vs]+|\vs|^2=0
$
we get
$$
\sum_{n\geq 0} \vp^n(-|\vs|^2-\vp^2+2\vp Re [\vs])\vs^{-1-n}=
\vs-2 \ Re [\vs]+\vp
$$
which equals the right hand side of (\ref{SS1}).
\end{proof}
\begin{remark}\label{remimport}
{\rm Observe that in the proof of  Theorem \ref{th5.9} we have not used the the fact that
the components of $\vp$ commute.
}
\end{remark}
\par\noindent
A direct consequence of Theorem \ref{th5.9} is that we can explicitly write the sum of the
noncommutative Cauchy kernel series:
\begin{corol}\label{corCauchy}
Let $\vec p$,
$\vec s\,\in\rr_n^0\oplus\rr_n^1$ be such that $\vp\vs\not=\vs\vp$.
Then
$$
\sum_{n\geq 0}\vp^n\vs^{-1-n} =-(\vp^2-2\vp Re [\vs]+|\vs|^2)^{-1}(\vp-\overline{\vs}),
$$
for $|\vp|<|\vs|$.
\end{corol}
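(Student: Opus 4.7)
The plan is to read the corollary off Theorem \ref{th5.9} by inverting the expression produced there. Set
$$
P := \vp-\overline{\vs}, \qquad Q := \vp^2-2\vp\, Re [\vs]+|\vs|^2, \qquad A := -P^{-1}Q.
$$
Theorem \ref{th5.9} states that $A$ is the inverse of the series $S := \sum_{n\geq 0}\vp^n \vs^{-1-n}$, so the goal is to show that $A^{-1} = -Q^{-1}P$, since the right-hand side of the corollary is precisely this quantity.

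First I would verify that $P$ and $Q$ are invertible, so that the formulas for $A$ and its candidate inverse are meaningful. Both lie in the paravector space $\rr_n^0\oplus\rr_n^1$, on which any nonzero element $a_0+\underline a$ admits the inverse $(a_0-\underline a)/(a_0^2+|\underline a|^2)$. The hypothesis $\vp \vs\neq \vs\vp$ forces $\vp\neq \overline{\vs}$ (because $\overline{\vs}$ commutes with $\vs$), so $P\neq 0$. The proposition containing equation \eqref{eqaurea} identifies the zero set of $Q$ as the sphere $[\vs]$, which is disjoint from $\{\vx : |\vx|<|\vs|\}$, so $Q\neq 0$ under the hypothesis $|\vp|<|\vs|$.

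Next, with $T := -Q^{-1}P$, the identities $AT = (-P^{-1}Q)(-Q^{-1}P) = 1$ and $TA = (-Q^{-1}P)(-P^{-1}Q) = 1$ are immediate by associativity, so $T$ is a two-sided inverse of $A$. Theorem \ref{th5.9} provides $AS=1$, and left-multiplying by $T$ yields $S = (TA)S = T(AS) = T$, which is the desired identity. Absolute convergence of $S$ on $\{|\vp|<|\vs|\}$ is the standard geometric-series estimate $\|\vp^n \vs^{-1-n}\|\leq \|\vp\|^n\|\vs^{-1}\|^{n+1}$ in any submultiplicative norm on $\rr_n$, so $S$ is a genuine element of the algebra on which the preceding manipulations make sense.

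The only point requiring care, and the nearest thing to an obstacle, is the order of factors when inverting a product in the noncommutative algebra: $(XY)^{-1}=Y^{-1}X^{-1}$, which is precisely why the formula for the sum places $Q^{-1}$ on the left and $(\vp-\overline{\vs})$ on the right, reversing the arrangement found in $A$. Once Theorem \ref{th5.9} is in hand, there is no deeper difficulty.
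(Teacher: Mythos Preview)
Your proposal is correct and takes essentially the same approach as the paper, which states the corollary as a direct consequence of Theorem~\ref{th5.9} with no further argument. Your explicit check that $P$ and $Q$ are nonzero paravectors (hence invertible) and the use of $(XY)^{-1}=Y^{-1}X^{-1}$ to rewrite $A^{-1}$ as $-Q^{-1}P$ is exactly the detail the paper leaves implicit.
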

\begin{remark}{\rm
We now observe that the expression
$(\vp^2-2\vp Re[\vs]+|\vs|^2)^{-1}(\vp-\overline{\vs})$ involves an inverse
which does not exist if we set $\vp=\overline{\vs}$; indeed, in this
case we have $\overline{\vs}^2-2\overline{\vs}Re[\vs]+|\vs|^2=0$. On may wonder if
the factor $(\vp-\bar \vs)$ can be simplified. However, it can be shown that
this is not possible and the function
$(\vp^2-2\vp Re[\vs]+|\vs|^2)^{-1}(\vp-\overline{\vs})$
 cannot be extended to a continuous function in
$\vp=\overline{\vs}$.}
\end{remark}

\end{document}